\title{An exponential inequality for Hilbert-valued
$U$-statistics of i.i.d.\ data}
\date{\today}
\author{Davide Giraudo}
\affil[$\dagger$]{Institut de Recherche Mathématique Avancée
UMR 7501, Université de Strasbourg and CNRS
7 rue René Descartes
67000 Strasbourg, France}
\numberwithin{equation}{section}
\renewcommand{\leq}{\leqslant}
\renewcommand{\geq}{\geqslant}
\newtheorem{Theorem}{Theorem}[section]
\newtheorem{Proposition}[Theorem]{Proposition}
\newtheorem{Lemma}[Theorem]{Lemma}
\newtheorem{Definition}[Theorem]{Definition}
\newtheorem{Corollary}[Theorem]{Corollary}
\theoremstyle{remark}
\tikzstyle{Vertex}=[circle,draw=LimeGreen!80,fill=LimeGreen!8,
\tikzstyle{Node}=[Vertex,draw=RoyalBlue!80,fill=RoyalBlue!8,inner sep=1.5pt]
\tikzstyle{Leaf}=[rectangle,draw=Black!70,fill=Black!16,
\tikzstyle{Edge}=[Maroon!80,cap=round,line width=1pt]
\tikzstyle{Mark1}=[draw=BrickRed!80,fill=BrickRed!8]
\tikzstyle{Mark2}=[draw=BurntOrange!80,fill=BurntOrange!8]
\tikzstyle{EdgeRew}=[->,RedOrange!80,cap=round,thick]
\newcommand{\intent}[1]{\llbracket #1\rrbracket}
\newcommand{\Bca}{\mathcal{B}}
\newcommand{\Fca}{\mathcal{F}}
\newcommand{\Gca}{\mathcal{G}}
\newcommand{\Hca}{\mathcal{H}}
\newcommand{\Sca}{\mathcal{S}}
\newcommand{\B}{\mathbb{B}}
\newcommand \ens[1]{\left\{ #1\right\}}
\newcommand \R{\mathbb R}
\newcommand{\Hi}{\mathbb{H}}
\newcommand \N{\mathbb N}
\newcommand \PP{\mathbb P}
\newcommand{\el}{\mathbb L}
\newcommand{\E}[1]{\mathbb E\left[#1\right]}
\newcommand \Z{\mathbb Z}
\newcommand \abs[1]{\left|#1\right|}
\newcommand{\pr}[1]{\left(#1\right)}
\newcommand{\norm}[1]{\left\lVert #1 \right\rVert}
\newcommand{\gr}[1]{\bm{#1}}
\newcommand{\gri}{\gr{i}}
\newcommand{\grj}{\gr{j}}
\newcommand{\conv}{\underset{\operatorname{conv}}{\leq}}
\newcommand{\inc}{\operatorname{Inc}}
\newcommand{\ind}[1]{\mathbf{1}_{#1}}
\newcommand{\dec}{\operatorname{dec}}
\newcommand{\scal}[2]{\left\langle #1,#2\right\rangle}
\begin{document}


\maketitle

\begin{abstract}
In this paper, we establish an exponential inequality for 
$U$-statistics of i.i.d.\ data, varying kernel and taking values in a
separable
Hilbert space. The bound are expressed as a sum of an exponential term
plus
an other one involving the tail of a sum of squared norms.  We start
by the
degenerate case. Then we provide applications to $U$-statistics 
of not necessarily degenerate fixed kernel, weighted $U$-statistics
and
incomplete $U$-statistics.
\end{abstract}

\section{An exponential inequality for Hilbert-valued degenerate $U$-statistics}
\label{sec:ineg_exp_Ustats}
\subsection{Definition of $U$-statistics}
Given an i.i.d.\ sequence $\pr{\xi_i}_{i\geq 1}$ taking values in a measurable 
space $\pr{S,\Sca}$, a separable
Banach space $\pr{\B,\norm{\cdot}_{\B}}$ and measurable functions
$h_{\gri}\colon S^m\to\B$ (where $S^m$ is endowed with the product 
$\sigma$-algebra of $\Sca$), the $U$-statistic of kernels
$\pr{h_{\gri}}$ is defined as
\begin{equation} \label{eq:def_U_stats}
  U_n\pr{\pr{h_{\gri} }_{\gr{i}\in\inc^m }     }
=\sum_{\gri\in\inc^m_n}h_{\gri}
 \pr{\xi_{\gri} },
\end{equation}
where $\inc^m=\ens{\pr{i_k}_{k=1}^m 1\leq
i_1<\dots<i_m}$,
$\inc^m_n=\ens{\gri=\pr{i_k}_{k=1}^m\in\N^m,1\leq
i_1<i_2<\dots<i_m\leq
n}$ and $\xi_{\gri}= \pr{\xi_{i_1},\dots,\xi_{i_m}}$.

When the kernels $h_{\gri}$ are index free, that is, $h_{\gri}=h$ 
and $h\pr{\xi_{\gri}}$ is integrable, it is known that 
$U_n\pr{h}/\binom n2$ converges almost surely to 
$\E{h\pr{\xi_{\intent{1,m}}}}$ (see \cite{MR0026294}), where $\xi_{\intent{1,m}}
=\pr{\xi_1,\dots,\xi_m}$ hence $U_n\pr{h}/\binom n2$ is an 
unbiased estimator of $\E{h\pr{\xi_{\intent{1,m}}}}$. Allowing the kernel $h$ 
to vary with the index opens a wider range of applications, including weighted 
$U$-statistics and also incomplete $U$-statistics.

In this paper, we will consider  $U$-statistics 
taking values in a separable Hilbert space that will be denoted by $\Hi$. We 
provide an inequality for the tail of 
$\norm{U_{n}\pr{h_{\gri}}  }_{\Hi}$. The upper bound is a addition of an 
exponential term with an other one which is expressed in terms 
via a tail of a sum of squared norms of some random variables linked 
with the kernels $h_{\gri}$ and the sequence $\pr{\xi_i}_{i\geq 1}$.

Exponential inequalities for $U$-statistics of i.i.d. data have been obtained 
by \cite{MR1323145}, \cite{MR1336800}, \cite{MR1153808}, \cite{MR1130366}, 
\cite{MR1857312}, \cite{MR2073426} for bounded kernels, \cite{MR1655931} for 
kernels having finite exponential moments. 

For dependent sequences, the following results are available for bounded 
kernels. In the mixing case, 
a deviation inequality has been established by \cite{MR3769824} for fixed 
kernel 
under an assumption on $\beta$-mixing coefficients, and conditional 
$\alpha$-mixing 
coefficients. \cite{MR2655834,MR3943118} dealt with the $\phi$-mixing case. 
The case of $\alpha$-mixing data combined with a control on the decay of the 
Fourier transform of the kernel was treated by \cite{MR4059185}. 
\cite{MR4550210} 
gave exponential inequalities for $U$-statistics of order two, of  varying 
kernel case where data is a function of a Markov chain.

One of the main motivations of studying Hilbert space valued $U$-statistic is 
to treat spatial sign for robust tests (see 
\cite{MR3650400,wegner2023robustchangepointdetectionfunctional,MR4499388}), 
where the sign function is replaced by the
map sending the origin to itself and
$u\mapsto u/\norm{u}_{\Hi}$ if $u\neq 0$. Wilcoxon-Mann-Whitney-type test are 
also studied in
\cite{MR3335109}. 
The consideration of $U$-statistics with varying kernel allows in particular to 
treat weighted $U$-statistics. These ones are widely used for change-point 
tests 
by 
\cite{MR4372100,MR4588210}, ranking problems by \cite{MR3517099,MR2396817}. 
Gini 
mean-differences, that is, when $m=2$, $S=\Hi$ and $h_{i_1,i_2}\pr{u_1,u_2}
=\norm{u_1-u_2}_{\Hi}$, when the sequence $\pr{\xi_i}_{i\geq 1}$ takes values 
in a separable Hilbert space can be useful in the study of 
functional data. The latter has also been done via $U$-statistics 
in \cite{MR4573388}. 
Empirical $U$-statistics have also been studied (see 
\cite{MR1183150} for the i.i.d.\ case, \cite{MR1621734,MR1455799,MR1261803} in 
the mixing case). They can be viewed as random elements of an $\el^2$ space 
hence the deviation inequalities we established can be used.

Our results deal with Hilbert space valued $U$-statistics, which are not the 
most general Banach spaces where $U$-statistics are involved 
(see for instance \cite{MR2294982} and 
\cite{giraudo2024}, where type $2$, 
respectively, smooth Banach spaces are considered). Let us explain why our work 
does not address the latter case. Our argument rests on a deviation inequality 
for Hilbert valued martingales. Using the dimension reduction 
explained in \cite{MR3077911}, section 4, 
such an inequality could be established for smooth Banach spaces (like in 
\cite{MR4414404}), at the cost 
of a constant that depends on $\B$ through a constant $C$ such that for each 
$x,y\in\B$, 
\begin{equation}\label{eq:norm_p_lisse}
 \norm{x+y}_{\B}^p\leq\norm{x}_{\B}^p+ pJ_x\pr{y}+C\norm{y}_{\B}^p,
\end{equation}
where $1<p\leq 2$ and $J_x\colon\B\to\R$ is linear and continuous. We would like 
to use an induction argument 
and a change of Banach space, that is, taking $\B^N$ like in \cite{MR4754186} 
but the constant $C$ 
in \eqref{eq:norm_p_lisse} may depend on $N$ (this was not an issue in the 
aforementioned paper, since the involved constant  
is related to a martingale moment inequality and is the same for $\B$ and 
$\B^N$). 

\subsection{An exponential inequality for degenerate $U$-statistics}
Our approach to derive deviation inequalities rests on martingale 
methods. In general, $U$-statistic of the form
\eqref{eq:def_U_stats} are not sums of martingale differences in 
some sense. In order to use such properties for each
index of summation, we will need the following assumption.
\begin{Definition}
 The $U$-statistic $U_n\pr{\pr{h_{\gri}}}$ defined as in 
\eqref{eq:def_U_stats} is degenerate 
 if for each $\gri\in\inc^m$ and $j_0\in\intent{1,m}$,   
 \begin{equation}\label{eq:def_degeneree}
\E{h_{\gri}\pr{\xi_{\intent{1,m}}} 
\mid \xi_j,j\in\intent{1,m}\setminus\ens{j_0} }=0\mbox{ a.s.},
 \end{equation}
where $\intent{a,b}=\ens{k\in\N,a\leq k\leq b }$ and 
$\xi_{\intent{1,m}}=\pr{\xi_1,\dots,\xi_m}$.
\end{Definition}
We will see later what can be done if \eqref{eq:def_degeneree} 
does not hold.
The inequality for degenerate $U$-statistics reads a follows.

\begin{Theorem}\label{thm:ineg_exp_deg_Ustat}
 Let $m\geq 1$. There exist constants $A_m$, $B_m$ and $C_m$ such that if 
$\pr{\Hi,\scal{\cdot}{\cdot}}$ is a separable Hilbert space,  
$\pr{\xi_i}_{i\geq 1}$ is an i.i.d.\ sequence taking values in a measurable 
space 
$\pr{S,\Sca}$,
$h_{\gri}\colon S^m\to \Hi$ are measurable functions for which  
\eqref{eq:def_degeneree} holds, then 
\begin{multline}\label{eq:inequalite_exp_deg_Ustats}
\PP\pr{ \max_{m\leq n\leq N}\norm{ 
\sum_{\gri\in\inc^m_n}h_{\gri}
 \pr{\xi_{\gri}  }}_{\Hi}  >x}\leq A_{m}\exp\pr{-\pr{\frac 
xy}^{\frac 2m}}
\\+B_{m}\int_1^\infty u\pr{1+ \log u  }^{\frac{m\pr{m+1}}2-1}
\PP\pr{\sqrt{\sum_{\gri\in\inc^m_N}\norm{h_{\gri}
 \pr{ \xi_{\gri}^{\dec} } }_{\Hi}^2 } >C_myu   
}du,
\end{multline}
where 
\begin{equation*}
 \xi_{\gri}^{\dec}= \pr{\xi_{i_1}^{\pr{1}},\dots,\xi_{i_m}^{\pr{m}} }
\end{equation*}
and $\pr{\xi_i^{\pr{\ell}}}_{i\geq 1},\ell\in\intent{1,m}$, are 
independent copies of 
$\pr{\xi_i}_{i\geq 1}$.
\end{Theorem}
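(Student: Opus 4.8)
The plan is to combine a decoupling step with an induction on the order $m$, the engine at each stage being a Fuk--Nagaev type exponential inequality for Hilbert-valued martingales. First I would decouple: since the $U$-statistic is degenerate, the decoupling inequality for degenerate $U$-statistics lets me bound the tail of $\max_{m\leq n\leq N}\norm{\sum_{\gri\in\inc^m_n}h_{\gri}\pr{\xi_{\gri}}}_{\Hi}$ by a constant $c_m$ times the corresponding tail of the fully decoupled statistic $S_n=\sum_{\gri\in\inc^m_n}h_{\gri}\pr{\xi_{\gri}^{\dec}}$, at the cost of replacing $x$ by $x/c_m$; this accounts for the appearance of $\xi_{\gri}^{\dec}$ on the right-hand side. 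The maximal (rather than fixed-$n$) version of decoupling needs a little care, but $\max_n\norm{S_n}_{\Hi}$ is a convex functional of the summands, so the usual tangent-sequence formulation applies.

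Reducing to $S_n$, I would induct on $m$. The key observation is that $\pr{S_n}_{m\leq n\leq N}$ is a martingale for the filtration $\Fca_n=\sigma\pr{\xi_i^{\pr{\ell}},\ i\leq n,\ \ell\in\intent{1,m}}$, with increment
\[
D_n=\sum_{i_1<\dots<i_{m-1}<n}h_{\pr{i_1,\dots,i_{m-1},n}}\pr{\xi^{\pr{1}}_{i_1},\dots,\xi^{\pr{m-1}}_{i_{m-1}},\xi^{\pr{m}}_n}.
\]
Indeed $\xi^{\pr{m}}_n$ is independent of $\Fca_{n-1}$ while the remaining variables are $\Fca_{n-1}$-measurable, so degeneracy in the $m$-th coordinate gives $\E{D_n\mid\Fca_{n-1}}=0$. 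In the base case $m=1$, $S_n$ is a sum of independent centred $\Hi$-valued variables, and the Hilbert-martingale exponential inequality yields the $\exp\pr{-\pr{x/y}^2}$ term together with the tail term whose log-power is $\tfrac{1\cdot 2}{2}-1=0$, matching the statement.

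For the induction step I would apply the Hilbert-martingale inequality to $\pr{S_n}$ with a free threshold $t$: schematically $\PP\pr{\max_n\norm{S_n}_{\Hi}>x}\leq 2\exp\pr{-x^2/(2t^2)}+\PP\pr{\sum_n\E{\norm{D_n}_{\Hi}^2\mid\Fca_{n-1}}>t^2}+\sum_n\PP\pr{\norm{D_n}_{\Hi}>\rho}$. Each $D_n$ is, conditionally on $\xi^{\pr{m}}_n$, a decoupled degenerate $U$-statistic of order $m-1$ in the first $m-1$ sequences (degeneracy being preserved in coordinates $1,\dots,m-1$), while $\sum_n\E{\norm{D_n}_{\Hi}^2\mid\Fca_{n-1}}$ is, after integrating out $\xi^{\pr{m}}$, governed by an order-$(m-1)$ structure; both are controlled by the inductive hypothesis. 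The exponent $2/m$ then emerges from balancing the nested sub-Gaussian tails: optimizing the threshold in $\exp\pr{-x^2/(2t^2)}$ against the $\exp\pr{-(\cdot)^{2/(m-1)}}$ tail furnished by the inductive step, i.e.\ setting $x^2/t^2=t^{2/(m-1)}$, gives $t=x^{(m-1)/m}$ and $x^2/t^2=x^{2/m}$. Integrating the tails over the threshold and absorbing the at most $N$ terms $\sum_n\PP\pr{\norm{D_n}_{\Hi}>\rho}$ contributes one extra power of $\log u$ per level, so that the $m$ levels accumulate the weight $u\pr{1+\log u}^{m(m+1)/2-1}$ against the tail of $\sqrt{\sum_{\gri\in\inc^m_N}\norm{h_{\gri}\pr{\xi_{\gri}^{\dec}}}_{\Hi}^2}$.

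The hardest part will be controlling the conditional quadratic variation $\sum_n\E{\norm{D_n}_{\Hi}^2\mid\Fca_{n-1}}$ and, through the inductive hypothesis, relating its tail back to the single quantity $\sqrt{\sum_{\gri}\norm{h_{\gri}\pr{\xi_{\gri}^{\dec}}}_{\Hi}^2}$, since $\norm{D_n}_{\Hi}^2$ is a squared norm of an order-$(m-1)$ chaos rather than a linear sum; this is where the precise bookkeeping of the log-powers and the dimension-free constants $A_m,B_m,C_m$ must be done carefully, so that nothing depends on $N$ or on $\dim\Hi$. The decoupling of the maximum and the exact chaining of thresholds across the $m$ induction levels are the remaining technical points.
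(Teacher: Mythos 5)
Your overall skeleton---decouple first (the maximal version is obtained exactly as you suggest, by applying the de la Pe\~na--Montgomery-Smith inequality in the Banach space $\Hi^N$ with the max norm; note that step needs no degeneracy), then induct on $m$ treating the last index as martingale time, with increments $D_n$ identified as you write them, and balance the sub-Gaussian exponent against the inherited exponent---is the same as the paper's. But there is a genuine gap at exactly the point you defer to ``careful bookkeeping'': the inductive hypothesis controls tails of $\norm{\sum_{\gri}h_{\gri}\pr{\xi_{\gri}^{\dec}}}_{\Hi}$, a \emph{linear} object, whereas what you must control after the martingale inequality are the \emph{quadratic} objects $\sum_n\E{\norm{D_n}_{\Hi}^2\mid\Fca_{n-1}}$ and $\sum_n\norm{D_n}_{\Hi}^2$, i.e.\ sums of squared norms of lower-order chaoses. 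You assert ``both are controlled by the inductive hypothesis'' without saying how, and as stated they are not of the form the hypothesis covers. The paper closes this loop with a specific idea absent from your proposal: a change of Hilbert space. Conditioning on the last sequence (Fubini, using independence), the vector $\pr{\sum_{\gri\in\inc^m_{j-1}}h_{\gri,j}\pr{\xi_{\gri}^{\dec},s_j}}_{j=m+1}^N$ is itself a single decoupled \emph{degenerate} $U$-statistic of order $m$ with values in $\widetilde{\Hi}=\Hi^{N-m}$, and $\sum_j\norm{\cdot}_{\Hi}^2$ is precisely $\norm{\cdot}_{\widetilde{\Hi}}^2$; since the induction statement $P\pr{m}$ is quantified over \emph{all} separable Hilbert spaces (this uniformity must be built into the statement being inducted on), it applies verbatim and returns the tail of $\sqrt{\sum_{\gri\in\inc^{m+1}_N}\norm{h_{\gri}\pr{\xi_{\gri}^{\dec}}}_{\Hi}^2}$. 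Without this device (or an equivalent one) the induction does not close.

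A second, related issue is your filtration. With the diagonal choice $\Fca_n=\sigma\pr{\xi_i^{\pr{\ell}},i\leq n,\ell\in\intent{1,m}}$, the conditional quadratic variation is genuinely random (an integral over the last coordinate of a squared chaos), which is why you are pushed toward a three-term Fuk--Nagaev bound with an increment-tail sum $\sum_n\PP\pr{\norm{D_n}_{\Hi}>\rho}$ that you then absorb by hand; your claim that this contributes ``one extra power of $\log u$ per level'' is asserted, not derived. The paper instead reveals all of the first $m$ decoupled sequences at time zero, taking $\Fca_m=\sigma\pr{\xi_{\gri}^{\dec},\gri\in\inc^m_{N-1}}$ and $\Fca_j=\Fca_m\vee\sigma\pr{\xi_u^{\pr{m+1}},u\leq j}$. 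Then $\E{\norm{D_j}_{\Hi}^2\mid\Fca_{j-1}}=\E{\norm{D_j}_{\Hi}^2\mid\Fca_m}$, so the conditional variance is time-zero measurable and the martingale inequality tailored to this case (Proposition~\ref{prop:ineg_deviation_martingale_cas_Hilbert_hyp_var_cond}) gives a bound involving \emph{only} the tail of $\sqrt{\sum_j\norm{D_j}_{\Hi}^2}$, with no conditional expectations and no separate increment term; the squared-norm sum is then handled by the change of Hilbert space above. With your filtration, the analogous reduction would require an $L^2\pr{S;\Hi}$-valued $U$-statistic, whose separability is not guaranteed for a general $\pr{S,\Sca}$, and the log-power bookkeeping (in the paper: applying $P\pr{m}$ with $y_2=yu/\pr{8\pr{1+\log u}^{m/2}}$ followed by an explicit integral estimate) would still have to be done.
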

Let us make some comments on Theorem~\ref{thm:ineg_exp_deg_Ustat}. 
 
 When $M_{\gri}:= \sup_{s_1,\dots,s_m\in 
S}\norm{h_{\gri}\pr{s_1,\dots,s_m} }_{\Hi}$ 
 is finite, the right hand side of \eqref{eq:inequalite_exp_deg_Ustats} 
 vanishes with the choice 
$y=\sqrt{\sum_{\gri\in\inc^m_N}M_{\gri}^2} /C_m$ 
hence we get 
\begin{equation}
 \PP\pr{ \max_{m\leq n\leq N}\norm{ 
\sum_{\gri\in\inc^m_n}h_{\gri}
 \pr{\xi_{\gri}  }}_{\Hi}  >x} 
 \leq A_{m}\exp\pr{-x^{\frac 2m}C_m^{\frac 2m} 
\pr{\sum_{\gri\in\inc^m_N}M_{\gri}^2}^{-m}   }.
\end{equation}
It is also known by \cite{MR1323145} that when $h=h_{\gri}$ bounded and  
degenerate with respect to $\pr{\xi_i}_{i\geq 1}$, the exponent
$2/m$ in the exponential term is optimal.

Notice also that the right hand side of
\eqref{eq:inequalite_exp_deg_Ustats} is
finite if and only if for each $\gri\in\inc^m_n$,
\begin{equation*}
\E{\norm{h_{\gri}\pr{\xi_{\intent{1,m}}}}_{\Hi}^2\pr{1+\log\pr{\norm{
h_{\gri}\pr{\xi_{\intent{1,m}}}}_{\Hi}}}^{\frac{m\pr{m+1}}{2} -1}
}<\infty.
\end{equation*}
In particular, this require a bit more than
the existence of a finite second moment. This
is not a restriction at all, since in the application  cases, the
random variable $\norm{h_{\gri}\pr{\xi_{\intent{1,m}}}}_{\Hi}$ is
supposed to admit finite exponential moments.
When we only have finite moment of order   between $1$ and $2$, one
can use the deviation inequality presented
in \cite{giraudo2024}.

Let us explain how Theorem~\ref{thm:ineg_exp_deg_Ustat} can be applied. The 
value of $x$ will be imposed by the application we have in mind. Moreover, for 
a fixed $x$,  the  right hand side of \eqref{eq:inequalite_exp_deg_Ustats} 
is the sum of two terms, an exponential one, which is increasing in $y$ and 
and a tail one, which is non-increasing in $y$. Usually, we will 
choose $y$ depending on $N$ and $x$ such that $\exp\pr{-\pr{x/y}^{2/m}}$ has 
the wanted decay, which will furnish a condition on the tail of 
$\sum_{\gri\in\inc^m_N}\norm{h_{\gri}
 \pr{\xi_{\gri}^{\dec} } }_{\Hi}^2$. However, the 
tail of such random variable is not easy to control. 
 We will provide some examples where it will be possible to bound it.

\section{Applications}

The result of Theorem~\ref{thm:ineg_exp_deg_Ustat} apply 
when the kernels are degenerate. In the next examples, we will 
see which kind of bound can be obtained for non-necessarily 
degenerate kernels, and also with a bound involving 
the random variables $\xi_{\intent{1,m}}$ instead of 
$\xi_i$, $1\leq i\leq N$. 

\subsection{Index free kernel}

In this Subsection, we consider the case where the functions 
$h_{\gri}$ 
do not depend on $\gri$ and will be simply denoted as $h$. The 
corresponding $U$-statistic will be expressed as 
\begin{equation*}
U_{m,n}\pr{h}:=\sum_{\gri\in\inc^m_n}
h \pr{\xi_{\gri}}.
\end{equation*}
The degeneracy assumption \eqref{eq:def_degeneree} reads as 
for each $j_0\in\intent{1,m}$, 
\begin{equation*} 
  \E{ h\pr{\xi_{\intent{1,m}}}
\mid \xi_{\intent{1,m}\setminus\ens{j_0}}} =0\mbox{ a.s.}
\end{equation*}
Of course, such an assumption has no reason to take place in general. We thus 
need to decompose $h$ into a sum of functions 
of less than $m$-variables for which the corresponding $U$-statistic is 
degenerate.

\begin{Definition}
Let $\pr{S,\Sca}$ be a measurable space and let $\pr{\Hi,\scal{\cdot}{\cdot}}$ 
be a separable Hilbert space.
We say that the kernel $h\colon S^m\to\Hi$ is symmetric if for each 
$s_1,\dots,s_m\in\Hi$ and each bijective map $\sigma\colon\intent{1,m}\to 
\intent{1,m}$, 
$h\pr{s_{\sigma\pr{1}},\dots,s_{\sigma\pr{m}}}=h\pr{s_1,\dots,s_m}$.
\end{Definition}

Define for $k\in\intent{1,m}$ the kernels
\begin{equation}\label{eq:def_hk}
h_k\colon S^k\to\Hi, h_k\pr{s_1,\dots,s_k}=
\sum_{j=0}^k\pr{-1}^{k-j}\sum_{\pr{u_\ell}_{\ell=1}^j \in\inc^j_k}
\E{h\pr{s_{u_1},\dots,s_{u_j},\xi_{j+1},\dots,\xi_{m}}}
\end{equation}
and $h_0=\E{h\pr{\xi_1,\dots,\xi_m}}$.
In this way, the following equality, known as the Hoeffding's decomposition 
(see \cite{MR0026294}), 
takes place:
\begin{equation}\label{eq:decomposition_Hoeffding}
U_{m,n}\pr{h}=\sum_{k=0}^m\binom{m}k\frac{\binom{n}{m}}{\binom{n}{k}}
U_{k,n}\pr{h_k}.
\end{equation}
When $h$ is degenerate, the terms of index $k\in\intent{0,m-1}$ vanish. 
Also, observe that for each $k$, $U_{k,n}\pr{h_k}$ is a degenerate 
$U$-statistic and that if $\E{h\pr{\xi_{\intent{1,m}}\mid \xi_{ 
\intent{1,k}}}}=0$, 
then $h_k\pr{\xi_{\intent{1,k}}}=0$ almost surely. 
\begin{Definition}\label{def:deg_ordre_d}
Let $m\geq 1$, let $\pr{S,\Sca}$ be a measurable space, let $\pr{\xi_i}_{i\geq 
1}$ be an i.i.d.\ sequence taking values $S$ and let 
$\pr{\Hi,\scal{\cdot}{\cdot}}$ be a separable Hilbert space. We say that 
the kernel $h\colon S^m\to\Hi$ is degenerate of order $d$ if for each 
$k\in\intent{0,d-1}$, $U_{k,n}\pr{h_k}=0$, where $h_k$ is defined as in 
\eqref{eq:def_hk}. 
\end{Definition}

It is clear that in view of \eqref{eq:decomposition_Hoeffding} that if 
$h$ is degenerate of order $d$, then 
\begin{equation}\label{eq:decomposition_Hoeffding_deg_d}
U_{m,n}\pr{h}=\sum_{k=d}^m\binom{m}k\frac{\binom{n}{m}}{\binom{n}{k}}
U_{k,n}\pr{h_k}.
\end{equation}

\begin{Theorem}\label{thm:dev_ineg_fixed_kernel}
Let $m\geq 1$. There exists constants $A_m$, $B_m$ and $C_m$ such that 
if $\pr{\Hi,\scal{\cdot}{\cdot}}$ is a separable Hilbert space, 
$\pr{\xi_i}_{i\geq 1}$ is an i.i.d.\ sequence taking values in a 
measurable space $\pr{S,\Sca}$, $h\colon S^m\to\Hi$ is symmetric 
and degenerate of order $d$, then for each positive $x$ and $y$, 
\begin{multline}\label{eq:dev_ineg_fixed_kernel}
\PP\pr{\max_{m\leq n\leq N}\norm{U_{m,n}\pr{h}}_{\Hi}>xN^{m-\frac d2}}
\\
\leq A_m\exp\pr{-\pr{\frac xy}^{\frac 2d}}
+B_m\sum_{k=d}^m\int_1^\infty\PP\pr{H_k>C_muN^{\frac{k-d}2}x^{1-\frac 
kd}y^{\frac kd}}u
\pr{1+\log u}^{\frac{m\pr{m+1}}{2}}du,
\end{multline}
where
\begin{equation*}
H_k:= \E{\norm{h\pr{\xi_{\intent{1,m } }}}  _{\Hi}  \mid 
\xi_{\intent{1,k}}} .
\end{equation*}
\end{Theorem}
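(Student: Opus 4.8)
The plan is to reduce everything to Theorem~\ref{thm:ineg_exp_deg_Ustat} through the Hoeffding decomposition. Since $h$ is degenerate of order $d$, \eqref{eq:decomposition_Hoeffding_deg_d} expresses $U_{m,n}(h)$ as $\sum_{k=d}^m\binom mk\frac{\binom nm}{\binom nk}U_{k,n}(h_k)$, where each $U_{k,n}(h_k)$ is a \emph{degenerate} $U$-statistic of order $k$ with the index-free kernel $h_k$ of \eqref{eq:def_hk}. The elementary identity $\binom mk\frac{\binom nm}{\binom nk}=\frac1{(m-k)!}(n-k)\cdots(n-m+1)\le N^{m-k}$ for $m\le n\le N$ lets me bound, by the triangle inequality,
\begin{equation*}
\max_{m\le n\le N}\norm{U_{m,n}(h)}_{\Hi}\le\sum_{k=d}^m N^{m-k}\max_{m\le n\le N}\norm{U_{k,n}(h_k)}_{\Hi}.
\end{equation*}
A union bound then splits the threshold $xN^{m-d/2}$ evenly among the $m-d+1$ indices, so that it suffices to control, for each $k\in\intent{d,m}$,
\begin{equation*}
\PP\pr{\max_{k\le n\le N}\norm{U_{k,n}(h_k)}_{\Hi}>z_k},\qquad z_k:=\frac{x}{m-d+1}N^{k-\frac d2}.
\end{equation*}

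Next I apply Theorem~\ref{thm:ineg_exp_deg_Ustat} to each $U_{k,n}(h_k)$ with a free parameter $w_k$ to be chosen. The point is that the exponent produced for the $k$-th term is $2/k$, but I am free to pick $w_k$ so that all these exponents align with the target $2/d$: choosing $w_k$ so that $(z_k/w_k)^{2/k}=(x/y)^{2/d}$, that is $w_k=z_k(y/x)^{k/d}\asymp x^{1-k/d}y^{k/d}N^{k-d/2}$, each exponential term becomes $A_k\exp\pr{-(x/y)^{2/d}}$, and summing over the finitely many $k$ yields the single term $A_m\exp\pr{-(x/y)^{2/d}}$. With this choice of $w_k$ the tail threshold $C_kw_ku$ in \eqref{eq:inequalite_exp_deg_Ustats} equals a constant times $x^{1-k/d}y^{k/d}N^{k-d/2}u=N^{k/2}\cdot\pr{N^{(k-d)/2}x^{1-k/d}y^{k/d}}u$, so the extra factor relative to the threshold $C_muN^{(k-d)/2}x^{1-k/d}y^{k/d}$ appearing in \eqref{eq:dev_ineg_fixed_kernel} is exactly $N^{k/2}$.

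It remains to replace the decoupled quantity $S_k:=\bigl(\sum_{\gri\in\inc^k_N}\norm{h_k(\xi_{\gri}^{\dec})}_{\Hi}^2\bigr)^{1/2}$ by $H_k$, absorbing precisely this factor $N^{k/2}$. Taking norms in \eqref{eq:def_hk} and applying Jensen's inequality termwise bounds $\norm{h_k(s_1,\dots,s_k)}_{\Hi}$ by a finite sum of conditional expectations of the form $\E{\norm{h(s_{u_1},\dots,s_{u_j},\xi_{j+1},\dots,\xi_m)}_{\Hi}}$ with $j\le k$. Evaluated at $\xi_{\gri}^{\dec}$, the top order ($j=k$) term is equal in distribution to $H_k$ because the coordinates of $\xi_{\gri}^{\dec}$ are independent copies of $\xi_1$; the lower order terms are equal in distribution to $H_j=\E{H_k\mid\sigma(\xi_i,i\in\intent{1,j})}$, hence dominated by $H_k$ through the conditional Jensen inequality. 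Since $\binom Nk\le N^k/k!$ and each summand of $S_k^2$ has expectation at most $C_k\E{H_k^2}$ by this domination, one gets $\E{S_k^2}\le C_kN^k\E{H_k^2}$, the $L^2$ incarnation of the desired $N^{k/2}$ scaling; finally the log exponent $k(k+1)/2-1$ in \eqref{eq:inequalite_exp_deg_Ustats} is at most the exponent $m(m+1)/2$ of \eqref{eq:dev_ineg_fixed_kernel}, so these weights cause no trouble.

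The main obstacle is carrying out this last step at the level of the whole integral rather than of a single moment. I expect to rewrite the two weighted tail integrals through Fubini as weighted second moments $\theta^{-2}\E{S_k^2(1+\log_+(S_k/\theta))^{m(m+1)/2}}$ and $(\theta')^{-2}\E{H_k^2(1+\log_+(H_k/\theta'))^{m(m+1)/2}}$ with $\theta=cN^{k/2}\theta'$, and to show the former is at most a constant multiple of the latter. The cancellation of $N^k$ is provided by $\E{S_k^2}\asymp N^k\E{H_k^2}$; the delicate point is the logarithmic weight, for which the crucial observation is that $S_k\le(k!)^{-1/2}N^{k/2}\max_{\gri}\norm{h_k(\xi_{\gri}^{\dec})}_{\Hi}$, so that $\log_+(S_k/(N^{k/2}\theta'))$ does not pick up a spurious factor $\log N$ and can be compared, tuple by tuple, with $\log_+(H_k/\theta')$. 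This is also where the hypothesis of a little more than a finite second moment enters, matching the integrability condition discussed after Theorem~\ref{thm:ineg_exp_deg_Ustat}.
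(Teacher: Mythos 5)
Your first two steps coincide with the paper's own proof: the Hoeffding decomposition \eqref{eq:decomposition_Hoeffding_deg_d} plus a union bound, then an application of Theorem~\ref{thm:ineg_exp_deg_Ustat} to each degenerate $U_{k,n}\pr{h_k}$ with the auxiliary parameter tuned so that every exponential term equals $\exp\pr{-\pr{x/y}^{2/d}}$; your bookkeeping there, including the residual factor $N^{k/2}$ to be absorbed, is correct (your $w_k$ rightly carries the factor $y^{k/d}$). The genuine gap is the third step, which you yourself call ``the main obstacle'' and only announce (``I expect to rewrite\dots''): nothing in the proposal proves that the weighted tail integral of $S_k$ is dominated by that of $H_k$ at a threshold $N^{k/2}$ times larger. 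Moreover, the mechanism you sketch is flawed. Once you bound $S_k\leq \pr{k!}^{-1/2}N^{k/2}\max_{\grj}\norm{h_k\pr{\xi_{\grj}^{\dec}}}_{\Hi}$, the logarithmic weight attached to a given tuple $\gri$ involves the \emph{maximum over all tuples}, and this cannot be compared tuple by tuple with the weight of $\gri$ itself. Indeed, if $\PP\pr{\norm{h_k\pr{\xi_{\intent{1,k}}}}_{\Hi}>t}\asymp t^{-2-\varepsilon}$, the maximum over the roughly $N/k$ pairwise disjoint (hence i.i.d.) decoupled tuples is of polynomial order in $N$, so the expected logarithmic weight grows like $\pr{\log N}^{m\pr{m+1}/2}$; summing over tuples, your intermediate quantity exceeds the target $N^k\,\E{H_k^2\pr{1+\log_+\pr{H_k/\theta'}}^{m\pr{m+1}/2}}$ by an unbounded factor $\pr{\log N}^{m\pr{m+1}/2}$, which the right-hand side of \eqref{eq:dev_ineg_fixed_kernel} (whose threshold contains no $\log N$) cannot absorb.

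What makes this step work --- and what the paper does --- is the convex ordering of Definition~\ref{def:conv_order}, which keeps the cardinality factor attached to the \emph{same} random variable instead of a maximum: for any nondecreasing convex $\varphi$, Jensen's inequality applied to the average of the $\binom{N}{k}$ variables $\binom{N}{k}\norm{h_k\pr{\xi_{\gri}^{\dec}}}_{\Hi}^2$ gives $\E{\varphi\pr{S_k^2}}\leq\E{\varphi\pr{\binom{N}{k}\norm{h_k\pr{\xi_{\intent{1,k}}}}_{\Hi}^2}}$, and combined with the termwise Jensen bound $\norm{h_k\pr{\xi_{\intent{1,k}}}}_{\Hi}\conv\kappa_kH_k$ (which you do have), this yields $\sum_{\gri\in\inc^k_N}\norm{h_k\pr{\xi_{\gri}^{\dec}}}_{\Hi}^2\conv N^k\kappa_kH_k^2$. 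Here the factor $\binom{N}{k}\leq N^k/k!$ sits inside the convex function evaluated at the single variable $\norm{h_k\pr{\xi_{\intent{1,k}}}}_{\Hi}^2$, so after normalizing by $N^k\pr{\theta'}^2$ it contributes only the constant $1/k!$ inside the logarithm --- no $\log N$ appears. The passage from this convex-order domination to tail integrals is then not done by a Fubini computation: the paper invokes the tail-comparison inequality \eqref{eq:conv_ordering_tails_integrales}, which produces a double integral, and collapses it via \eqref{eq:borne_integrale_double} (Lemma~2.2 of \cite{MR4294337}); this last step is precisely where the logarithmic exponent increases from $k\pr{k+1}/2-1$ to the $m\pr{m+1}/2$ of the statement. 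Replacing your maximum/Fubini argument by these ingredients turns your outline into a complete proof.
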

This result relates to  Corollary~1.1 in \cite{MR4294337} in the following way. 
Our result deals with Hilbert-valued $U$-statistics, while that in 
\cite{MR4294337} is restricted to the real-valued case. However, in this latter 
case, the constants 
are explicit which is not the case in \eqref{eq:dev_ineg_fixed_kernel}.

Rates of convergence in the law of large numbers can be derived 
in exactly the same way as it was done in \cite{MR4294337}, 
replacing the absolute value by the Hilbert norm. We will 
therefore simply state them, without proof.

 \begin{Corollary}\label{cor:large_deviation_Ustats}
Let $\pr{S,\Sca}$ be a measurable space, $m\geq 2$,  $h\colon S^m\to\R$ be a 
symmetric function,  and 
 let $\pr{\xi_i}_{i\geq 1}$ be an i.i.d.\ sequence of random variables with 
values in $S$.
 Suppose that $h$ is degenerate of order $d$.  If there exists a positive 
 $\gamma>0$ such that $M:=\sup_{t>0}\exp\pr{t^\gamma} 
 \PP\pr{   \norm{h\pr{\xi_{\intent{1,m}}}}_{\Hi}>t }$ is finite,  then for all 
$x>0$, 
 \begin{equation*}
 \PP\pr{\max_{m\leq n\leq N}\norm{U_{m,n}\pr{h} }_{\Hi}> N^{m}x}\leq K_1
 \exp\pr{ -K_2 N^{\frac{d\gamma}{2+d\gamma}}x^{\frac{2\gamma }{2+d\gamma} }   },
 \end{equation*}
 where $K_1$ and $K_2$ depend on $\gamma$, $M$ and $m$. 
 \end{Corollary}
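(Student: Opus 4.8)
The plan is to deduce the corollary from Theorem~\ref{thm:dev_ineg_fixed_kernel} by a judicious choice of the free parameter $y$, after rewriting the threshold $N^m x$ at the natural scale $N^{m-d/2}$. Writing $\tilde x:=N^{d/2}x$ so that $N^m x=\tilde x\,N^{m-d/2}$, I would apply \eqref{eq:dev_ineg_fixed_kernel} with $\tilde x$ in place of $x$ and with
\[
 y=N^{\frac{d}{2+d\gamma}}x^{\frac{2}{2+d\gamma}}.
\]
A direct computation then shows that the exponential term becomes $A_m\exp(-(\tilde x/y)^{2/d})=A_m\exp(-N^{\frac{d\gamma}{2+d\gamma}}x^{\frac{2\gamma}{2+d\gamma}})$, which is exactly the target rate $\exp(-K_2T)$ with $T:=N^{\frac{d\gamma}{2+d\gamma}}x^{\frac{2\gamma}{2+d\gamma}}$; and that for each $k\in\intent{d,m}$ the threshold inside the $k$-th integral equals, up to the factor $C_m u$, the quantity $N^{\frac{k}{2+d\gamma}}x^{1-\frac{k\gamma}{2+d\gamma}}$.

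First I would record the tail behaviour of $H_k=\E{\norm{h\pr{\xi_{\intent{1,m}}}}_\Hi\mid\xi_{\intent{1,k}}}$. Setting $Y=\norm{h\pr{\xi_{\intent{1,m}}}}_\Hi$, the hypothesis gives $\PP(Y>t)\le M\exp(-t^\gamma)$, whence $\E{Y^p}\le M\,\Gamma(p/\gamma+1)$ for every $p\ge 1$. Since conditional expectation is an $L^p$-contraction, $\E{H_k^p}\le\E{Y^p}\le M\,\Gamma(p/\gamma+1)$, and optimising Markov's inequality $\PP(H_k>s)\le s^{-p}\E{H_k^p}$ over $p$ yields a bound of the form $\PP(H_k>s)\le K\exp(-\kappa s^\gamma)$ with $K,\kappa$ depending only on $M$ and $\gamma$. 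This moments-to-Weibull-tail argument has the advantage of covering the range $0<\gamma<1$, where conditional Jensen is unavailable.

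Next I would insert this tail bound into the integrals. The $\gamma$-th power of the $k$-th threshold is $N^{\frac{k\gamma}{2+d\gamma}}x^{\gamma(1-\frac{k\gamma}{2+d\gamma})}$, and a short calculation gives the key identity
\[
 N^{\frac{k\gamma}{2+d\gamma}}x^{\gamma\pr{1-\frac{k\gamma}{2+d\gamma}}}=T\cdot\pr{\frac{N}{x^\gamma}}^{\frac{(k-d)\gamma}{2+d\gamma}},
\]
so that on the region $N\ge x^\gamma$ every exponent ($k\ge d$) dominates $T$. Hence, on $\{N\ge x^\gamma\}$, each integral $\int_1^\infty u(1+\log u)^{m(m+1)/2}\exp(-\kappa(C_m u)^\gamma T\,(N/x^\gamma)^{(k-d)\gamma/(2+d\gamma)})\,du$ is, for $T$ larger than an absolute constant, bounded by $C\exp(-\kappa' T)$; summing the finitely many $k$ and combining with the exponential term gives the claimed bound there. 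For the complementary region $\{N<x^\gamma\}$ I would not invoke Theorem~\ref{thm:dev_ineg_fixed_kernel} at all, but rather the deterministic estimate $\max_{m\le n\le N}\norm{U_{m,n}\pr{h}}_\Hi\le\binom{N}{m}\max_{\gri\in\inc^m_N}\norm{h\pr{\xi_\gri}}_\Hi$ together with $\binom{N}{m}\le N^m$: this forces the event $\{\max_{m\le n\le N}\norm{U_{m,n}\pr{h}}_\Hi>N^m x\}$ into $\{\max_{\gri\in\inc^m_N}\norm{h\pr{\xi_\gri}}_\Hi>x\}$, whose probability is at most $N^m M\exp(-x^\gamma)$ by a union bound; since $x^\gamma\ge N$ here one checks $x^\gamma\ge T$ and absorbs the polynomial factor $N^m\le x^{m\gamma}$ into a fraction of the exponent, recovering $K_1\exp(-K_2T)$.

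The main obstacle I anticipate is the $k>d$ integral terms: for these the threshold carries a strictly smaller power of $x$ than $T$ does, so term-by-term domination fails precisely when $x^\gamma>N$. The resolution is the regime split above, which isolates this difficulty into the large-deviation region where the crude union bound is already sharp enough. A secondary, purely bookkeeping point is that the estimate is only needed for $T$ bounded below (otherwise $\PP(\cdot)\le 1\le K_1\exp(-K_2T)$ for $K_1$ large), which is exactly what lets the integrals converge with the correct exponential factor; tracking the finitely many constants $A_m,B_m,C_m,M,\gamma,m$ into the final $K_1,K_2$ is then routine.
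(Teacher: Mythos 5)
Your proof is correct: rescaling $x\mapsto N^{d/2}x$ in Theorem~\ref{thm:dev_ineg_fixed_kernel}, taking $y=N^{\frac{d}{2+d\gamma}}x^{\frac{2}{2+d\gamma}}$ (which is the largest $y$ compatible with the exponential term), converting the moment hypothesis into the Weibull tail $\PP\pr{H_k>s}\leq K\exp\pr{-\kappa s^\gamma}$ via moments and Markov, and splitting into the regimes $N\geq x^\gamma$ and $N<x^\gamma$ (with the crude union bound in the latter, where indeed $x^\gamma\geq T$) all check out, and the regime split is genuinely needed since for $k>d$ the integral thresholds decay in $x$ more slowly than $T$. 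This is essentially the approach the paper intends: it states the corollary without proof, deferring to the real-valued argument of \cite{MR4294337}, which likewise derives the rate from the fixed-kernel deviation inequality by an appropriate choice of $y$ and a treatment of the large-$x$ range equivalent to your union-bound step.
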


\subsection{Weighted sums}

In this Subsection, we provide results for weighted sums 
of non-necessarily degenerate $U$-statistics. 

When $h_{\gri}$ does not depend on $\gri$ 
or more generally when $h_{\gri}=T_{\gri}\pr{h}$, where 
$T_{\gri}\colon\Hi\to\Hi$ is a bounded linear operator and $h$ is degenerate, 
it is possible 
to control the tail of 
$\sum_{\gri\in\inc^m_N}\norm{h_{\gri}
 \pr{\xi_{\gri}^{\dec}  } }_{\Hi}^2$ in terms 
 of the operator norms of $T_{\gri}$, defined as 
$\norm{T_{\gri}}_{\mathcal{B}\pr{\Hi}}=\sup\ens{\norm{T_{\gri}\pr{h}}_{\Hi},h\in
\Hi ,
\norm{h}_{\Hi }=1 } $, and the tail of 
 $\norm{h\pr{\xi_{\intent{1,m}} }}_{\Hi}$.  

When $h$ is not necessarily degenerate, one can still do the Hoeffding's 
decomposition and get that 
\begin{equation}\label{eq:Hoeffding_weighted}
\sum_{\gri\in\inc^m_n}T_{\gri}\pr{
 h\pr{\xi_{\gri  } }}
 =\sum_{k=d }^m 
 \sum_{\gri\in\inc^k_n}
 a_{\gri}^{\pr{n,k}}\pr{
 h_k\pr{\xi_{\gri}   }},
\end{equation}
where $h_k$ is defined as in \eqref{eq:def_hk} and for 
$\gri=\pr{i_1,\dots,i_k} \in\inc^k_n$, 
\begin{equation*} 
a_{\gri}^{\pr{n,k}}=\sum_{\gr{j}=\pr{j_1,\dots,j_m}\in\inc^m_n,\ens{i_1,\dots,
i_k } 
\subset\ens{j_1,\dots,j_m}  } T_{\grj}.
\end{equation*}
When $m=2$, one has 
\begin{equation*}
a_i^{\pr{n,1}}=\sum_{j=1}^n\pr{T_{i,j}+T_{j,i}},
\end{equation*}
with the convention that $T_{j,i}=0$ if $j\leq i$. For $m=3$, one has 
\begin{equation*}
a_{i_1,i_2}^{\pr{n,2}}=\sum_{j=1}^n
\pr{T_{i_1,i_2,j}+T_{i_1,j,i_2}+T_{j,i_1,i_2}}
\end{equation*}
\begin{equation*}
a_{i_1}^{\pr{n,1}}=\sum_{1\leq j_1<j_2\leq n}
\pr{T_{i_1,j_1,j_2}+
T_{j_1,i_1,j_2}+T_{j_1,j_2,i_1}},
\end{equation*}
with the convention that $T_{k_1,k_2,k_3}=0$ if $k_1<k_2<k_3$ does 
not hold.

Since the operators $a_{\gri}^{\pr{n,k}}$ depend on $n$, 
we cannot formulate a deviation inequality for the running 
maximum.
A deviation inequality for 
weighted and non-necessarily degenerate $U$-statistics reads as follows. 
 
\begin{Corollary}\label{cor:weighted_Ustat_non_deg}
Let $m\geq 1$.  There exist constants $a_m$, $b_m$ and $c_m$ such that if 
$\pr{\Hi,\scal{\cdot}{\cdot}}$ is a separable Hilbert space,  
$\pr{\xi_i}_{i\geq 1}$ is an i.i.d.\ sequence taking values in a measurable 
space 
$\pr{S,\Sca}$,
$h \colon S^m\to \Hi$ is a measurable function which is degenerate of 
order $d$ and $T_{\gri}\colon \Hi\to \Hi$ are bounded linear 
operators, 
then for each positive $x$ and $y$,
\begin{multline}\label{eq:inequalite_exp_deg_Ustats_weighted}
\PP\pr{  \norm{ 
\sum_{\gri\in\inc^m_n}T_{\gri}\pr{
 h\pr{\xi_{\gri}  }}   }_{\Hi}  >xn^{\frac{m-d}2} 
\sqrt{\sum_{\gri\in\inc^m_n}\norm{T_{\gri}}_{\Bca\pr{\Hi}}^2   } }\leq 
a_{m}\exp\pr{-\pr{\frac 
xy}^{\frac 2m}}
\\+b_{m}\int_1^\infty u\pr{1+ \log u  }^{\frac{m\pr{m+1}}2}
\PP\pr{ H>c_myu   
}du,
\end{multline}
where 
\begin{equation*}
 H=\norm{h\pr{\xi_{\intent{1,m}}}}_{\Hi}.
\end{equation*}

\end{Corollary}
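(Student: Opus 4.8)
The plan is to deduce the statement from Theorem~\ref{thm:ineg_exp_deg_Ustat} by means of the Hoeffding decomposition \eqref{eq:Hoeffding_weighted}, namely $\sum_{\gri\in\inc^m_n}T_{\gri}\pr{h\pr{\xi_{\gri}}}=\sum_{k=d}^m\sum_{\gri\in\inc^k_n}a_{\gri}^{\pr{n,k}}\pr{h_k\pr{\xi_{\gri}}}$. I would treat each $k\in\intent{d,m}$ separately by setting $g_{\gri}:=a_{\gri}^{\pr{n,k}}\circ h_k\colon S^k\to\Hi$. Because $h_k$ yields a degenerate $U$-statistic of order $k$ and $a_{\gri}^{\pr{n,k}}$ is a fixed bounded linear operator, conditional expectations commute with $a_{\gri}^{\pr{n,k}}$ and each $g_{\gri}$ satisfies \eqref{eq:def_degeneree} at order $k$. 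Splitting the threshold as $\sum_{k=d}^m x_k=xn^{\pr{m-d}/2}\sqrt{\sum_{\gri\in\inc^m_n}\norm{T_{\gri}}_{\Bca\pr{\Hi}}^2}$ and using the triangle inequality together with a union bound, I apply Theorem~\ref{thm:ineg_exp_deg_Ustat} with $N=n$---dropping the running maximum, which is legitimate since the $a_{\gri}^{\pr{n,k}}$ depend on $n$---to each of the $m-d+1$ pieces, with its own parameters $x_k$ and $y_k$.

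Two estimates feed the argument. On the operator side, $\norm{a_{\gri}^{\pr{n,k}}}_{\Bca\pr{\Hi}}\leq\sum_{\grj\in\inc^m_n,\,\gri\subset\grj}\norm{T_{\grj}}_{\Bca\pr{\Hi}}$, so that, setting $M_k^2:=\sum_{\gri\in\inc^k_n}\norm{a_{\gri}^{\pr{n,k}}}_{\Bca\pr{\Hi}}^2$, Cauchy--Schwarz over the $\binom{n-k}{m-k}$ indices $\grj$ containing $\gri$ followed by an exchange of summations gives $M_k^2\leq\binom{n-k}{m-k}\binom mk\sum_{\grj\in\inc^m_n}\norm{T_{\grj}}_{\Bca\pr{\Hi}}^2$; thus $M_k$ is at most of order $n^{\pr{m-k}/2}\sqrt{\sum_{\grj}\norm{T_{\grj}}_{\Bca\pr{\Hi}}^2}$. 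On the random side, the definition \eqref{eq:def_hk}, the triangle inequality and the conditional Jensen inequality for Bochner integrals show that $\norm{h_k\pr{\xi_{\intent{1,k}}}}_{\Hi}$ is bounded by a sum of at most $2^k$ terms, each distributed as $\E{H\mid\xi_{\intent{1,j}}}$ for some $j\leq k$; a second use of conditional Jensen then dominates every $\pr{1+\log}$-weighted moment of $\norm{h_k}_{\Hi}$ by the corresponding moment of $H$, with constants depending only on $m$.

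The heart of the matter is the tail term of Theorem~\ref{thm:ineg_exp_deg_Ustat}, which features $Z_k:=\bigl(\sum_{\gri\in\inc^k_n}\norm{g_{\gri}\pr{\xi_{\gri}^{\dec}}}_{\Hi}^2\bigr)^{1/2}$. I bound
\[
Z_k^2\leq\sum_{\gri\in\inc^k_n}\norm{a_{\gri}^{\pr{n,k}}}_{\Bca\pr{\Hi}}^2\norm{h_k\pr{\xi_{\gri}^{\dec}}}_{\Hi}^2
=M_k^2\sum_{\gri\in\inc^k_n}p_{\gri}\norm{h_k\pr{\xi_{\gri}^{\dec}}}_{\Hi}^2,
\]
where $p_{\gri}:=\norm{a_{\gri}^{\pr{n,k}}}_{\Bca\pr{\Hi}}^2/M_k^2$ is a probability weight. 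Crucially, the random variables $\norm{h_k\pr{\xi_{\gri}^{\dec}}}_{\Hi}^2$ are identically distributed in $\gri$, so rather than a union bound over the $\binom nk$ multi-indices (which would cost a factor $\binom nk$, replace the normalization $n^{\pr{m-k}/2}$ by $n^{m/2}$ and destroy the exponential decay), I turn the tail integral into a weighted moment by Fubini and apply Jensen's inequality to the convex map $z\mapsto z\pr{1+\log z}^{k\pr{k+1}/2-1}$ and the weights $p_{\gri}$. This collapses the weighted average to a single copy distributed as $\norm{h_k\pr{\xi_{\intent{1,k}}}}_{\Hi}^2$ without any combinatorial prefactor; together with the domination of $\norm{h_k}_{\Hi}$ by $H$ and with undoing Fubini, the $k$-th tail term is at most a constant times $\int_1^\infty u\pr{1+\log u}^{m\pr{m+1}/2}\PP\pr{H>\pr{C_ky_k/M_k}u}\,du$.

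It remains to choose the parameters. Taking $y_k:=M_ky$ makes each effective threshold $C_ky_k/M_k=C_ky$ a fixed multiple of $y$, which produces the stated tail term, while taking each $x_k$ to be a fixed fraction of $xn^{\pr{m-d}/2}\sqrt{\sum_{\grj}\norm{T_{\grj}}_{\Bca\pr{\Hi}}^2}$ and using the bound on $M_k$ gives $x_k/y_k$ at least of order $\pr{x/y}n^{\pr{k-d}/2}$. Since $k\geq d$ and $2/k\geq 2/m$, each factor $\exp\pr{-\pr{x_k/y_k}^{2/k}}$ is then at most $\exp\pr{-\pr{x/y}^{2/m}}$ once $x\geq y$ (the range $x<y$ being handled trivially), and summing the $m-d+1$ contributions yields \eqref{eq:inequalite_exp_deg_Ustats_weighted}. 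The main obstacle, requiring the most care, is exactly this reduction of the tail of the decoupled squared-norm sum to the tail of the single variable $H$ with the correct power $n^{\pr{m-k}/2}$ and no $n$-dependent prefactor; the convexity argument is what makes it possible, and the slightly larger logarithmic exponent $m\pr{m+1}/2$---compared with $k\pr{k+1}/2-1$ in Theorem~\ref{thm:ineg_exp_deg_Ustat}---provides the slack needed to absorb the constants and the finite sum over $k$.
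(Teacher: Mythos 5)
Your proposal is correct and follows essentially the same route as the paper's proof: the Hoeffding decomposition \eqref{eq:Hoeffding_weighted}, an application of Theorem~\ref{thm:ineg_exp_deg_Ustat} to the degenerate kernels $a_{\gri}^{\pr{n,k}}\circ h_k$ at each order $k\in\intent{d,m}$, and the same Jensen/convexity argument with the probability weights $\norm{a_{\gri}^{\pr{n,k}}}_{\Bca\pr{\Hi}}^2/\sum_{\gr{i'}}\norm{a_{\gr{i'}}^{\pr{n,k}}}_{\Bca\pr{\Hi}}^2$ (which the paper packages as the convex ordering $\conv$ combined with \eqref{eq:conv_ordering_tails_integrales} and \eqref{eq:borne_integrale_double}) to collapse the decoupled squared-norm sum to a single copy of $H$ without any combinatorial prefactor, together with the identical operator-norm count $\sum_{\gri\in\inc^k_n}\norm{a_{\gri}^{\pr{n,k}}}_{\Bca\pr{\Hi}}^2\leq \kappa_m n^{m-k}\sum_{\grj\in\inc^m_n}\norm{T_{\grj}}_{\Bca\pr{\Hi}}^2$. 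The only divergence is bookkeeping: you take $y_k=M_ky$, which equalizes the tail thresholds at $c_m yu$ and handles the exponential terms through $x_k/y_k\gtrsim \pr{x/y}n^{\pr{k-d}/2}$, whereas the paper equalizes the exponential terms via $y_k=x^{1-k/d}y^{k/d}$ and fixes the tails afterwards; both choices yield \eqref{eq:inequalite_exp_deg_Ustats_weighted} after the standard rescaling of $y$ that absorbs constants into $c_m$.
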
 
 
\subsection{Incomplete $U$-statistics} 
\label{subsec:incomplete}
The computation of a $U$-statistic of order $m$ based on a sample 
of size $n$ requires a computation of a number of terms of order
$n^m$, which can be high in practise. For
this reason, \cite{MR0474582} introduced the so-called incomplete
$U$-statistics, whose rough idea is to put a random weight
equal to $0$ or $1$, reducing the number of terms that one has 
to compute. Here are the most common ways of defining the 
random weights.
\begin{itemize}
 \item Sampling without replacement: we pick without replacement $N$ $m$-uples
which belong to $\inc^m$.
\item Sampling with replacement:  we pick with replacement $N$ $m$-uples
which belong to $\inc^m$.
\item Bernoulli sampling: consider for
each $\gri\in\inc^m_n$, a random
variable $Z_{n;\gri}$ taking the value $1$ with probability $p_n$ and
$0$ with probability $1-p_n$. Moreover, we assume that the family
$\pr{Z_{n,\gri}}_{\gri\in\inc^m_n}$ is independent and also
independent of the sequence $\pr{\xi_i}_{i\in\Z}$.
\end{itemize}
The weak convergence of incomplete $U$-statistics 
has been established in \cite{MR0753810}.
Rates in the law of large numbers were obtained by \cite{MR2915089} and for 
$\el^1$ convergence by \cite{MR4503429}.
Recent papers include incomplete $U$-statistics in a high dimensional setting, 
like 
\cite{MR4025737}, and \cite{MR4289844} obtained a central
limit theorem when the data is based on a triangular array.

Our first result deals with sampling with or without replacement. 

\begin{Corollary}[Deviation inequality for sampling with and without 
replacement]\label{cor:deviation_incomplete_Ustats_replacement}
Let $m\geq 1$.  There exist constants $a_m$, $b_m$ and $c_m$ such that if 
$\pr{\Hi,\scal{\cdot}{\cdot}}$ is a separable Hilbert space,  
$\pr{\xi_i}_{i\geq 1}$ is an i.i.d.\ sequence taking values in a measurable 
space 
$\pr{S,\Sca}$,
$h \colon S^m\to \Hi$ is a measurable function which is degenerate of 
order $d$, $N\geq 1$ and $n\geq m$ are integers and 
$\pr{Z_{n,\gri}}_{\gri\in\inc^m_n}$ is a collection of random variables taking 
the 
values $0$ or $1$ which is independent of $\pr{\xi_i}_{i\geq 1}$ and such that 
$\sum_{\gri\in\inc^m_n}Z_{n,\gri}
=N$ and $x,y>0$, then 
\begin{multline}\label{eq:deviation_inequality_sampling_rep}
 \PP\pr{\norm{\sum_{\gri\in\inc_n^m}Z_{n,\gri}h\pr{\xi_{\gri}}    
}_{\Hi}>x \sqrt{N}\sqrt{\min\ens{N,n^{m-d}  }}  }\leq 
a_m\exp\pr{-\pr{\frac{x}{y}}^{\frac 2m}}\\
+b_m \int_1^\infty  u\pr{1+\log u}^{\frac{m\pr{m+1}}2}
\PP\pr{ H>c_m yu}du.
\end{multline}
 \end{Corollary}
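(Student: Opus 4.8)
The plan is to view the incomplete $U$-statistic as a weighted $U$-statistic with operator weights $T_{\gri}=Z_{n,\gri}\,\mathrm{Id}_{\Hi}$ and to follow the proof of Corollary~\ref{cor:weighted_Ustat_non_deg}, the only change being a sharper combinatorial estimate made possible by the constraint that the weights take values in $\ens{0,1}$. Since the family $\pr{Z_{n,\gri}}$ is independent of $\pr{\xi_i}_{i\geq 1}$, I would first condition on it: it suffices to prove \eqref{eq:deviation_inequality_sampling_rep} for arbitrary deterministic weights $z_{\gri}\in\ens{0,1}$ with $\sum_{\gri\in\inc^m_n}z_{\gri}=N$, because the bound obtained is uniform over all such configurations and can then be integrated against the law of $\pr{Z_{n,\gri}}$.

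Because $h$ is degenerate of order $d$, I would apply the weighted Hoeffding decomposition \eqref{eq:Hoeffding_weighted} with $T_{\grj}=z_{\grj}\,\mathrm{Id}_{\Hi}$, which gives $\sum_{\gri\in\inc^m_n}z_{\gri}h\pr{\xi_{\gri}}=\sum_{k=d}^m\sum_{\gri\in\inc^k_n}c_{\gri}^{\pr{k}}h_k\pr{\xi_{\gri}}$, where $c_{\gri}^{\pr{k}}=\sum_{\grj\in\inc^m_n,\,\gri\subset\grj}z_{\grj}$ counts the selected $m$-uples whose index set contains $\gri$, and $h_k$ is the kernel from \eqref{eq:def_hk}. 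Each inner sum is a degenerate $U$-statistic of order $k$ with varying kernel $c_{\gri}^{\pr{k}}h_k$, so Theorem~\ref{thm:ineg_exp_deg_Ustat} applies to it. The crucial point, and the only place the sampling constraint is used, is the bound $\sum_{\gri\in\inc^k_n}\pr{c_{\gri}^{\pr{k}}}^2\leq\binom mk N\min\ens{N,n^{m-d}}$: one has $\sum_{\gri\in\inc^k_n}c_{\gri}^{\pr{k}}=\binom mk N$ since each selected $m$-uple contains exactly $\binom mk$ subsets of size $k$, while the two pointwise bounds $c_{\gri}^{\pr{k}}\leq N$ and $c_{\gri}^{\pr{k}}\leq\binom{n-k}{m-k}\leq n^{m-k}\leq n^{m-d}$ (valid for $k\geq d$) give $\max_{\gri}c_{\gri}^{\pr{k}}\leq\min\ens{N,n^{m-d}}$; multiplying the two estimates yields the claim. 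This replaces the bound $\binom mk n^{m-d}\sum_{\grj}\norm{T_{\grj}}_{\Bca\pr{\Hi}}^2$ used for general operators in Corollary~\ref{cor:weighted_Ustat_non_deg}, and is exactly what upgrades the normalisation from $n^{\pr{m-d}/2}\sqrt N$ to $\sqrt N\sqrt{\min\ens{N,n^{m-d}}}$.

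Writing $R:=\sqrt{N\min\ens{N,n^{m-d}}}$, I would then apply Theorem~\ref{thm:ineg_exp_deg_Ustat} to the $k$-th term, distributing the level $xR$ and the free parameter $y$ among $k=d,\dots,m$, and use that the slowest exponential decay, namely $\exp\pr{-\pr{x/y}^{2/m}}$, occurs at $k=m$ because $2/k\geq 2/m$; this produces the single exponential term $a_m\exp\pr{-\pr{x/y}^{2/m}}$. For the tail part, the conditional bound from \eqref{eq:inequalite_exp_deg_Ustats} involves $\sqrt{\sum_{\gri\in\inc^k_n}\pr{c_{\gri}^{\pr{k}}}^2\norm{h_k\pr{\xi_{\gri}^{\dec}}}_{\Hi}^2}$; here I would bound $\norm{h_k}_{\Hi}$ by conditional expectations of $H=\norm{h\pr{\xi_{\intent{1,m}}}}_{\Hi}$ through \eqref{eq:def_hk} and Jensen's inequality, insert the combinatorial bound $\sum_{\gri}\pr{c_{\gri}^{\pr{k}}}^2\leq\binom mk R^2$, and reduce the tail of the weighted sum to the tail of a single $H$, the $\inc^k_n$-indexed count being absorbed into the polylogarithmic factor of the $u$-integral. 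Since $k\pr{k+1}/2-1<m\pr{m+1}/2$ for $k\leq m$, each term's polylogarithmic weight is dominated by $\pr{1+\log u}^{m\pr{m+1}/2}$, and summing the $m-d+1$ contributions and relabelling the constants gives \eqref{eq:deviation_inequality_sampling_rep}.

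The main obstacle is this last tail reduction: controlling the tail of $\sqrt{\sum_{\gri}\pr{c_{\gri}^{\pr{k}}}^2\norm{h_k\pr{\xi_{\gri}^{\dec}}}_{\Hi}^2}$ by that of a single $H$, uniformly in the uneven weights $c_{\gri}^{\pr{k}}$ and the many dependent copies $h_k\pr{\xi_{\gri}^{\dec}}$, is delicate and is precisely the technical estimate already carried out in the proof of Corollary~\ref{cor:weighted_Ustat_non_deg}. The present argument needs only to re-run that estimate with the improved bound on $\sum_{\gri}\pr{c_{\gri}^{\pr{k}}}^2$, so the genuinely new ingredient is the combinatorial inequality $\sum_{\gri\in\inc^k_n}\pr{c_{\gri}^{\pr{k}}}^2\leq\binom mk N\min\ens{N,n^{m-d}}$, whose two-sided control of $c_{\gri}^{\pr{k}}$ is what produces the sharp factor $\min\ens{N,n^{m-d}}$.
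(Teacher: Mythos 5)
Your proposal is correct and follows essentially the same route as the paper's proof: conditioning on the weights via independence, the Hoeffding-type decomposition $\sum_{\gri}z_{\gri}h\pr{\xi_{\gri}}=\sum_{k=d}^m\sum_{\gri\in\inc^k_n}c^{\pr{k}}_{\gri}h_k\pr{\xi_{\gri}}$, applying Theorem~\ref{thm:ineg_exp_deg_Ustat} to each degenerate piece, the convex-ordering/Jensen reduction of the tail to a single $H$, and equalizing the exponential terms via the choice of $y_k$ under $x/y>1$. Even your key combinatorial bound $\sum_{\gri}\pr{c^{\pr{k}}_{\gri}}^2\leq\pr{\max_{\gri}c^{\pr{k}}_{\gri}}\sum_{\gri}c^{\pr{k}}_{\gri}\leq \binom mk N\min\ens{N,n^{m-d}}$ is just a rephrasing of the paper's estimate $A\leq N^2$ and $A\leq n^{m-k}N$ on the same quantity.
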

Observe that the sampling with or without replacement is packed 
gives the same result. The only crucial thing is the numbers of 
random weights that are not zero. 

Also, observe that the normalization $\sqrt{N}\sqrt{\min\ens{N,n^{m-d}  }}$ is 
decreasing with $d$: if the $U$-statistic is degenerate of 
order $m$, we only need a normalization by $\sqrt{N}$ in order 
to get a bounded independent of $N$ and $n$. In the other extreme case, that 
is, when $d=1$, which corresponds to the case where $h_{\gri}\pr{\xi_{\gri}}$ 
is 
centered, if one takes $N\leq n^{m-1}$ (which is logical if one wants to 
reduce significantly the number of elements), then one needs 
the much stronger normalization $N$.

A deviation inequality for Bernoulli sampling reads as follows.
\begin{Corollary}[Deviation inequality for Bernoulli sampling]
\label{cor:deviation_incomplete_Ustats_Bernoulli}
Let $m\geq 1$.  There exist constants $a_m$, $b_m$ and $c_m$ such that if 
$\pr{\Hi,\scal{\cdot}{\cdot}}$ is a separable Hilbert space,  
$\pr{\xi_i}_{i\geq 1}$ is an i.i.d.\ sequence taking values in a measurable 
space 
$\pr{S,\Sca}$,
$h \colon S^m\to \Hi$ is a measurable function which is degenerate of 
order $d$, $N\geq 1$ and $n\geq m$ are integers and 
$\pr{Z_{n,\gri}}_{\gri\in\inc^m_n}$ is a collection of random variables taking 
the 
values $0$ or $1$ with respective probabilities $1-p_n$ and $p_n$, which is 
independent of $\pr{\xi_i}_{i\geq 1}$ and $x,y>0$, then  
\begin{multline}\label{eq:deviation_inequality_sampling_Ber}
 \PP\pr{\norm{\sum_{\gri\in\inc_n^m}Z_{n,\gri}h\pr{\xi_{\gri}}    
}_{\Hi}>x n^{m}\sqrt{p_n}\sqrt{\min\ens{p_n,n^{-d}  }}  }\\
\leq 
a_m\exp\pr{-\frac{n^mp_n^2}2}+ a_m\exp\pr{-\pr{\frac{x}{y}}^{\frac 2m}}
+b_m \int_1^\infty  u\pr{1+\log u}^{\frac{m\pr{m+1}}2}
\PP\pr{H>c_m yu}du.
\end{multline}
\end{Corollary}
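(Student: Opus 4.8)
The plan is to condition on the number of retained summands and to reduce the statement to the sampling-without-replacement inequality of Corollary~\ref{cor:deviation_incomplete_Ustats_replacement}. Set $N_0=\sum_{\gri\in\inc^m_n}Z_{n,\gri}$; since the $Z_{n,\gri}$ are i.i.d.\ Bernoulli of parameter $p_n$ and independent of $\pr{\xi_i}$, the variable $N_0$ follows a binomial law with parameters $\binom nm$ and $p_n$, of mean $\E{N_0}=\binom nm p_n$. The crucial point is that, conditionally on $\ens{N_0=N}$, the family $\pr{Z_{n,\gri}}$ is still independent of $\pr{\xi_i}$ and is a collection of $\ens{0,1}$-valued weights summing to $N$; Corollary~\ref{cor:deviation_incomplete_Ustats_replacement} therefore applies conditionally and delivers, for every value of $N$, a bound whose right-hand side does not depend on $N$. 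I would then split the probability in \eqref{eq:deviation_inequality_sampling_Ber} according to whether $N_0\leq K_m\E{N_0}$ or $N_0>K_m\E{N_0}$, where $K_m>1$ is a large constant, depending only on $m$, to be fixed at the end.

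On $\ens{N_0\leq K_m\E{N_0}}$ I would compare the two normalizations. As $\binom nm\asymp_m n^m$ for $n\geq m$, one has $\E{N_0}\asymp_m n^mp_n$, hence
\begin{equation*}
n^m\sqrt{p_n}\sqrt{\min\ens{p_n,n^{-d}}}=\sqrt{n^mp_n}\,\sqrt{\min\ens{n^mp_n,n^{m-d}}}\asymp_m\sqrt{\E{N_0}}\,\sqrt{\min\ens{\E{N_0},n^{m-d}}},
\end{equation*}
with constants depending on $m$ alone. Since $N\mapsto\sqrt N\sqrt{\min\ens{N,n^{m-d}}}$ is nondecreasing and $\min\ens{K_ma,b}\leq K_m\min\ens{a,b}$ for $K_m\geq1$, on $\ens{N_0=N}$ with $N\leq K_m\E{N_0}$ the without-replacement normalization $\sqrt N\sqrt{\min\ens{N,n^{m-d}}}$ is bounded by $\kappa_m n^m\sqrt{p_n}\sqrt{\min\ens{p_n,n^{-d}}}$ for some $\kappa_m$ depending only on $m$. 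Consequently the event in \eqref{eq:deviation_inequality_sampling_Ber} is contained in $\ens{\norm{\sum_{\gri}Z_{n,\gri}h\pr{\xi_{\gri}}}_{\Hi}>\pr{x/\kappa_m}\sqrt N\sqrt{\min\ens{N,n^{m-d}}}}$, to which I apply Corollary~\ref{cor:deviation_incomplete_Ustats_replacement} conditionally with $x$ and $y$ both divided by $\kappa_m$, so that their ratio is unchanged. Integrating the resulting bound against the law of $N_0$ produces the second and third terms of \eqref{eq:deviation_inequality_sampling_Ber}, up to a redefinition of $a_m$, $b_m$ and $c_m$.

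The event $\ens{N_0>K_m\E{N_0}}$ contributes the first term. By the multiplicative Chernoff bound for the binomial law,
\begin{equation*}
\PP\pr{N_0>K_m\E{N_0}}\leq\exp\pr{-\E{N_0}\,\phi\pr{K_m}},\qquad \phi\pr{K}=K\log K-K+1,
\end{equation*}
which is trivially valid (the left-hand side being zero) when $K_mp_n\geq1$. Writing $\E{N_0}\geq\gamma_m n^mp_n$ for a constant $\gamma_m>0$ depending on $m$ only, I would fix $K_m$ so large that $\gamma_m\phi\pr{K_m}\geq1/2$; then $\E{N_0}\phi\pr{K_m}\geq\tfrac12 n^mp_n\geq\tfrac12 n^mp_n^2$ because $p_n\leq1$, so that $\PP\pr{N_0>K_m\E{N_0}}\leq\exp\pr{-n^mp_n^2/2}$.

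The only genuinely delicate point is this last estimate. The natural threshold $2\E{N_0}$ yields a Chernoff exponent proportional to $n^mp_n$ but with a multiplicative constant (coming from $\binom nm/n^m\to1/m!$) that is too small to dominate $n^mp_n^2/2$ for moderate values of $p_n$; it is exactly the freedom to enlarge $K_m$ --- which costs only the harmless factor $\kappa_m$ in the normalization of the previous step --- that lets the exponent absorb the constant $1/2$. Beyond this, the argument is pure bookkeeping of constants, the one thing to verify being that every implicit constant in the normalization comparison depends on $m$ alone, and not on $d$, $n$ or $p_n$.
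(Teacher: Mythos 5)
Your proof is correct, but it takes a genuinely different route from the paper's. The paper does not reduce the Bernoulli case to the sampling-without-replacement case: it proves both corollaries in parallel from Theorem~\ref{thm:ineg_exp_deg_Ustat}, via the common decomposition $\sum_{\gri}Z_{n,\gri}h\pr{\xi_{\gri}}=\sum_{k=d}^m\sum_{\gri\in\inc^k_n}Z^{\pr{k}}_{n,\gri}h_k\pr{\xi_{\gri}}$, then splits on the event $\ens{\sum_{\gri}Z_{n,\gri}\leq 2n^mp_n}$ (whose complement is bounded by Azuma--Hoeffding, giving $2\exp\pr{-n^mp_n^2/2}$ directly since the threshold $2n^mp_n$ exceeds the mean $\binom nm p_n$ by at least $n^mp_n$), conditions on the weight values, applies Theorem~\ref{thm:ineg_exp_deg_Ustat} to the fixed-weight kernels, and controls the resulting quadratic functional $A=\sum_{\gri}\bigl(\sum_{\grj\in I_k\pr{\gri}}z_{\grj}\bigr)^2$ by explicit combinatorial estimates plus the convex-ordering inequality \eqref{eq:conv_ordering_tails_integrales}. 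You instead condition on the binomial count $N_0$, observe that under $\PP\pr{\cdot\mid N_0=N}$ the weights still satisfy all hypotheses of Corollary~\ref{cor:deviation_incomplete_Ustats_replacement} (independence from $\pr{\xi_i}$ is preserved when conditioning on an event in $\sigma\pr{Z}$, and the conditional bound is uniform in $N$), compare the two normalizations on $\ens{N_0\leq K_m\E{N_0}}$, and kill the complement with a multiplicative Chernoff bound. Both are sound; your reduction is more economical, reusing Corollary~\ref{cor:deviation_incomplete_Ustats_replacement} as a black box and avoiding a second pass through the Hoeffding decomposition and convexity arguments, at the price of the large-constant bookkeeping for $K_m$ that you correctly identify as the delicate point (the paper sidesteps it by taking the threshold $2n^mp_n$, a large multiple of $\E{N_0}$, where plain additive Hoeffding already yields exponent $\geq 2\,m!\,n^mp_n^2$). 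Two trivial points you should still record: the case $N=0$ must be treated separately (the sum is then zero, so the event is empty), since Corollary~\ref{cor:deviation_incomplete_Ustats_replacement} assumes $N\geq 1$; and your constant $\gamma_m$ exists because $\binom nm\geq\pr{n/m}^m$ for $n\geq m$.
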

Let us compare Corollaries~\ref{cor:deviation_incomplete_Ustats_replacement} 
with
\ref{cor:deviation_incomplete_Ustats_Bernoulli}. The 
right hand side of \eqref{eq:deviation_inequality_sampling_rep}
is similar, up to the constants, to the sum of the second and 
third terms of the right hand side of 
\eqref{eq:deviation_inequality_sampling_Ber}. These quantities are independent 
of $n$ and of the involved parameters of the sampling methods. The 
required normalizations to have a right hand side independent of $n$, $N$ or 
$p_n$
(up the the first term for  \eqref{eq:deviation_inequality_sampling_Ber}) 
depend on $d$ and are the weakest if $d=m$ and the strongest if $d=1$. When 
$N=n^mp_n$, 
the normalizing terms coincide.  
Nevertheless, 
an other exponential term appears in 
\eqref{eq:deviation_inequality_sampling_Ber} due to the control of the 
probability that the sum of the weights is bigger than some number. 

Let us point out that a Bernstein type
exponential inequality for incomplete real valued $U$-statistics has been 
established by \cite{maurer2022exponentialfinitesamplebounds}. The results 
deals with sampling with replacement, bounded kernels and 
the author does not make any assumption on degeneracy. The results 
are expressed in terms of the number of $m$-uples among the chosen $N$ that 
contain a specific value $k$, or that contain both $k$ and $\ell$. Therefore, 
this does not make Corollary~\ref{cor:deviation_incomplete_Ustats_replacement} 
directly comparable with Maurer's results.

If we assume that $H$ has a weak exponential moment, namely that for some 
positive $\kappa_0$ and $\alpha>0$, 
\begin{equation}\label{eq:weak_expo_moment}
\sup_{t>0}\exp\pr{\kappa_0 t^{\alpha}}
\PP\pr{H>t}<\infty,
\end{equation}
the bounds \eqref{eq:deviation_inequality_sampling_rep} and 
\eqref{eq:deviation_inequality_sampling_Ber} can be simplified as follows.
 
For the sampling with or without replacement, one gets the existence of a 
constant $\kappa>0$ such that 
\begin{equation}
\sup_{n\geq m}\sup_{x>0}\exp\pr{\kappa 
x^{\frac{2\gamma}{2+m\gamma}}}\PP\pr{\norm{\sum_{\gri\in\inc_n^m}Z_{n,\gri}h\pr{
\xi_{\gri}}    
}_{\Hi}>x \sqrt{N}\sqrt{\min\ens{N,n^{m-d}  }}  }<\infty.
\end{equation}
For Bernoulli sampling, there exists a constant $K$ such that for each $n$, 
$p_n\in (0,1)$ and $x>0$, 
\begin{equation}
 \exp\pr{-\min\ens{ \frac{n^mp_n}2,\kappa x^{\frac{2\gamma}{2+m\gamma}} 
}}\PP\pr{\norm{\sum_{\gri\in\inc_n^m}Z_{n,\gri}h\pr{\xi_{\gri}}    
}_{\Hi}>x n^{m}\sqrt{p_n}\sqrt{\min\ens{p_n,n^{-d}  }}  }\leq K.
\end{equation}
\section{Proofs}

\subsection{Proof of Theorem~\ref{thm:ineg_exp_deg_Ustat}}
Let us first explain the global idea of proof. It will be show that 
it is sufficient to prove Theorem~\ref{thm:ineg_exp_deg_Ustat} 
for decoupled $U$-statistics and only in the case $x/y>3^{m/2}$. Then we 
proceed 
by induction over $m$. The induction step is performed first via an application 
of the martingale inequality given in 
Proposition~\ref{prop:ineg_deviation_martingale_cas_Hilbert_hyp_var_cond} for 
an appropriated choice of the filtration. Then, in order to use the induction 
assumption, we introduce a new Hilbert space so that we can apply the 
result for $U$-statistics of lower order. 

In order to ease the notation of the exponent of the $\log$ appearing 
in the involved terms in the proof, we define 
\begin{equation*}
 p_m=\frac{m\pr{m+1}}2-1,\quad m\geq 1.
\end{equation*}
In view of Corollary~\ref{cor:decoupling_max}, it suffices to prove 
Theorem~\ref{thm:ineg_exp_deg_Ustat} for decoupled $U$-statistics. Indeed, 
assume that for each $m\geq 1$, we can find constants $A'_m$, $B'_m$ and 
$C'_m$ 
for which the inequality 
\begin{multline*} 
\PP\pr{ \max_{m\leq n\leq N}\norm{ 
\sum_{\gri\in\inc^m_n}h_{\gri}
 \pr{\xi_{\gri}^{\dec}   }}_{\Hi}  >x}\leq 
A'_{m}\exp\pr{-\pr{\frac 
xy}^{\frac 2m}}
\\+B'_{m}\int_1^\infty u\pr{1+ \log u  }^{p_m}
\PP\pr{\sqrt{\sum_{\gri\in\inc^m_N}\norm{h_{\gri}
 \pr{\xi_{\gri}^{\dec}  } }_{\Hi}^2 } >C'_myu   
}du
\end{multline*}
holds for each Hilbert space $\Hi$ and each functions $h_{\gri}$ which 
are degenerate with respect to $\pr{\xi_i}_{i\geq 1}$. Applying 
Corollary~\ref{cor:decoupling_max}, we derive that 
\begin{multline*} 
\PP\pr{ \max_{m\leq n\leq N}\norm{ 
\sum_{\gri\in\inc^m_n}h_{\gri}
 \pr{\xi_{\gri}  }}_{\Hi}  >x}\leq 
A'_{m}K_m\exp\pr{-\pr{\frac{x}{yK_m}}^{\frac 2m}}
\\+B'_{m}K_m\int_1^\infty u\pr{1+ \log u  }^{p_m}
\PP\pr{\sqrt{\sum_{\gri\in\inc^m_N}\norm{h_{\gri}
 \pr{\xi_{\gri}^{\dec}  } }_{\Hi}^2 } >C'_myu   
}du
\end{multline*}
and replacing $y$ by $y/K_m$, we can see that letting $A_m:= A'_mK_m$, 
$B_m=B'_m K_m$ and $C_m=C'_m/K_m$, \eqref{eq:inequalite_exp_deg_Ustats} holds. 

We can also reduce the proof to the case $x/y>3^{m/2}$, by replacing $A_m$ by 
$\max\ens{A_m,e^3}$, so that the contribution of $\max\ens{A_m,e^3} 
\exp\pr{-\pr{x/y}^{2/m}}$ makes the inequality trivial for the range of 
$\pr{x,y}$ for which $x/y\leq 3^{m/2}$.

Therefore, we will prove by induction on $m\geq 1$ the assertion $P\pr{m}$ 
defined as follows: "there exists constants $a_m$, $b_m$ and $c_m$ such that 
for each measurable space $\pr{S,\Sca}$, each independent copies 
$\pr{\xi_i^{k}}$, $k\in\intent{1,m}$ of an i.i.d.\ $S$-valued sequence 
$\pr{\xi_i}_{i\geq 1}$, each separable Hilbert space 
$\pr{\Hi,\scal{\cdot}{\cdot}}_{\Hi}$, each collection $h_{\gri}\colon 
S^m\to \Hi$, $\gri\in\inc^m$ of measurable maps such that for each 
$\gri\in\inc^m$ and each $j_0\in\intent{1,m}$,
\begin{equation}\label{eq:deg_dans_def_Pm}
 \E{h_{\gri}\pr{\xi_{\intent{1,m}  }}\mid 
 \sigma\pr{\xi_j,j\in\intent{1,m}\setminus \ens{j_0}}
 }=0,
\end{equation}
the following inequality holds for each $x,y>0$ satisfying $x/y>3^{m/2}$ and 
each 
$N\geq m$:
\begin{multline*} 
\PP\pr{ \max_{m\leq n\leq N}\norm{ 
\sum_{\gri\in\inc^m_n}h_{\gri}
 \pr{\xi_{\gri}^{\dec}   }}_{\Hi}  >x}\leq 
a_{m}\exp\pr{-\pr{\frac 
xy}^{\frac 2m}}
\\+b_{m}\int_1^\infty u\pr{1+ \log u  }^{p_m}
\PP\pr{\sqrt{\sum_{\gri\in\inc^m_N}\norm{h_{\gri}
 \pr{\xi_{\gri}^{\dec} } }_{\Hi}^2 } >c_myu   
}du."
\end{multline*}
For $m=1$, this follows directly from 
Proposition~\ref{prop:ineg_deviation_sans_var_cond_cas_Hilbert}, since 
\eqref{eq:deg_dans_def_Pm} implies that the considered $U$-statistic 
is a sum of independent centered random variables. \\
Assume that $P\pr{m}$ holds for some $m\geq 1$ and let us show $P\pr{m+1}$.
Let $\pr{S,\Sca}$ be a measurable space, $\pr{\xi_i}_{i\geq 1}$ an i.i.d.\ 
sequence of $S$-valued random variables, let $\pr{\Hi,\scal{\cdot}{\cdot}}$
be a Hilbert space and let $h_{\gri,i_{m+1}}\colon S^{m+1}\to\Hi$ 
be measurable functions such that for each $j_0\in\intent{1,m+1}$, 
 \begin{equation}\label{eq:def_degeneree_m_plus_1}
\E{h_{\gri,i_{m+1}}\pr{\xi_{\intent{1,m+1}} } 
\mid\sigma\pr{\xi_j,j\in\intent{1,m+1}\setminus\ens{j_0} }}=0\mbox{ a.s.}
 \end{equation}
 Let $N\geq m+1$ and $x,y>0$ such that $x/y>3^{\pr{m+1}/2}$.
Let us show that there are constants $a_{m+1}$, $b_{m+1}$ and $c_{m+1}$ 
depending only on $m+1$ such that 
\begin{multline}\label{eq:inequalite_exp_deg_Ustats_m+1}
\PP\pr{ \max_{m+1\leq n\leq N}\norm{ 
\sum_{\gri\in\inc^{m+1}_n}h_{\gri}
 \pr{\xi_{\gri}^{\dec}   }}_{\Hi}  >x}\leq 
a_{m+1}\exp\pr{-\pr{\frac 
xy}^{\frac{2}{m+1}}}
\\+b_{m+1}\int_1^\infty u\pr{1+ \log u  }^{p_{m+1}}
\PP\pr{\sqrt{\sum_{\gri\in\inc^{m+1}_N}\norm{h_{\gri}
 \pr{\xi_{\gri}^{\dec} } }_{\Hi}^2 
} >c_{m+1}yu   }du.
\end{multline}
Define for $j\geq m+1$ the random variable 
\begin{equation*}
D_j:=\sum_{\gri\in\inc^m_{j-1}}
h_{\gri,j}
 \pr{\xi_{\gri}^{\dec} ,\xi_{j}^{\pr{m+1}}  }
\end{equation*} 
 and the $\sigma$-algebras
 \begin{equation*}
  \Fca_m:=\sigma\pr{\xi_{\gri}^{\dec} ,
\gri\in\inc^m_{N-1}  },
 \end{equation*}
\begin{equation*}
 \Fca_j=\Fca_m\vee\sigma\pr{\xi_u^{\pr{m+1}},u\leq j}, 
j\geq m+1.
\end{equation*}
Recall the following elementary property of conditional expectations:
if $Y$ is an integrable random variable on a probability space 
$\pr{\Omega,\Fca,\PP}$ and $\Gca$, $\Hca$ are two sub-$\sigma$-algebras of 
$\Fca$ such that $\Hca$ is independent of $\sigma\pr{Y}\vee\Gca$, then
\begin{equation}\label{eq:elem_prop_cond_exp}
 \E{Y\mid\Gca\vee\Hca}= \E{Y\mid\Gca }.
\end{equation}
Let $j\geq m+1$ be fixed. Then the following equality holds:
\begin{equation*}
 \E{D_j\mid\Fca_{j-1}    }=\sum_{\gri\in\inc^m_{j-1}}
 \E{h_{\gri,j}
 \pr{\xi_{\gri}^{\dec},\xi_{j}^{\pr{m+1}}  }
 \mid\Fca_{j-1}}.
\end{equation*}
Using equality \eqref{eq:elem_prop_cond_exp} for a fixed 
$\gri\in\inc^m_{j-1}$  with 
$Y=h_{\gri,j}
 \pr{\xi_{\gri}^{\dec} ,\xi_{j}^{\pr{m+1}}  }$, 
$\Gca=\sigma\pr{ \xi_{\gri}^{\dec} }$ and 
\begin{equation*}
 \Hca:=\sigma\pr{\xi_{\gr{i'}}^{\dec} ,
\gr{i'}\in\inc^m_{N-1}\setminus\ens{\gri} 
}\vee\sigma\pr{\xi_u^{\pr{m+1}},u\leq j-1},
\end{equation*}
furnishes the equality 
\begin{equation*}
 \E{D_j\mid\Fca_{j-1}    }=\sum_{\gri \in\inc^m_{j-1}}
 \E{h_{\gri,j}
 \pr{\xi_{\gri}^{\dec} ,\xi_{j}^{\pr{m+1}}  }
 \mid \sigma\pr{ \xi_{\gri}^{\dec  }   }},
\end{equation*}
which equals $0$ by \eqref{eq:def_degeneree_m_plus_1}
hence $\pr{D_j}_{j\geq m+1}$ is a martingale difference sequence 
with respect to the filtration $\pr{\Fca_j}_{j\geq m}$. Moreover, 
observe that 
\begin{equation*}
 \E{\norm{D_j}_{\Hi}^2\mid\Fca_{j-1}}
=\E{\norm{D_j}_{\Hi}^2\mid\Fca_{m} \vee  
\sigma\pr{\xi_u^{\pr{m+1}},u\leq j-1}
}
\end{equation*}
hence an other application of \eqref{eq:elem_prop_cond_exp} to 
$\Gca=\Fca_m$ and $\Hca=\sigma\pr{\xi_u^{\pr{m+1}},u\leq j-1}$ gives 
$ \E{\norm{D_j}_{\Hi}^2\mid\Fca_{j-1}}= \E{\norm{D_j}_{\Hi}^2\mid\Fca_{m}}$. 
Using Proposition~\ref{prop:ineg_deviation_martingale_cas_Hilbert_hyp_var_cond} 
with $x_1=x $ and $y_1  =y^{\frac{1}{m+1}}x^{\frac{m}{m+1}}$
gives
  \begin{multline} 
\PP\pr{ \max_{m+1\leq n\leq N}\norm{ 
\sum_{\gri\in\inc^{m+1}_n}h_{\gri}
 \pr{\xi_{\gri}^{\dec}  }}_{\Hi}  >x}=\PP\pr{ \max_{m+1\leq n\leq N}
 \norm{\sum_{j=m+1}^n D_j}_{\Hi}  >x}\\ 
 \leq 4  \exp\pr{-\pr{\frac{x}{y}}^{\frac{2}{m+1}}}
 +4\int_1^\infty u\PP\pr{\sqrt{\sum_{j=m+1}^N\norm{D_j}_{\Hi}^2} >   
 y^{\frac{1}{m+1}}x^{\frac{m}{m+1}}\frac{u}{8} }du,
\end{multline} 
which can be rewritten, by independence between 
$\pr{\xi_{\gri}^{\dec}}_{\gri\in\inc^m}$ and $\pr{\xi_j^{\pr{m+1}}}_{j=m+1}^N$, 
as 
\begin{multline}\label{eq:inequalite_exp_deg_Ustats_m+1_etape_2}
 \PP\pr{ \max_{m+1\leq n\leq N}\norm{ 
\sum_{\gri\in\inc^{m+1}_n}h_{\gri}
 \pr{\xi_{\gri}^{\dec}   }}_{\Hi}  >x}
 \leq    4\exp\pr{-\pr{\frac{x}{y}}^{\frac{2}{m+1}}}\\ 
 + 4\int_1^\infty u\int_{S^{N-m}  
}\PP\pr{  \sum_{j=m+1}^N\norm{\sum_{\gri\in\inc^m_{j-1}}
h_{\gri,j}
 \pr{\xi_{\gri}^{\dec},s_j}  
}_{\Hi}^2 >  
 y^{\frac{2}{m+1}}x^{\frac{2m}{m+1}}\frac{u^2}{64} }
d\PP_{\xi_{\intent{m+1,N}}^{\pr{m+1}}}\pr{s_{\intent{m+1,N}}} du,
\end{multline} 
where for a function $g\colon S^{N-m}\to\R$, 
\begin{equation*}
 \int 
g\pr{s_{m+1},\dots,s_N}d\PP_{\xi_{\intent{m+1,N}}^{\pr{m+1}}}\pr{s_{\intent{m+1,
N}}} =\int g\pr{s_{m+1},\dots,s_N}d\PP_{\xi_{m+1}}\pr{s_{m+1}}
\dots d\PP_{\xi_{N}}\pr{s_{N}}.
\end{equation*}
 In order to control the tail involved in the right hand side of the previous 
equation, we 
 introduce the separable Hilbert space 
$\pr{\widetilde{\Hi},\scal{\cdot}{\cdot}_{\widetilde{\Hi}}}$ which is defined 
as $\widetilde{\Hi}=\ens{\pr{x_j}_{j\in\intent{m+1,N}},x_j\in\Hi}$ and 
\begin{equation*}
\scal{\pr{x_j}_{j\in\intent{m+1,N}}}{\pr{y_j}_{j\in\intent{m+1,N}}}_{\widetilde{
\Hi}}=\sum_{j=m+1}^N\scal{x_j}{y_j}_{\Hi},
\end{equation*}
and for fixed $s_{j},j\in\intent{m+1,N}$, we define the functions 
$\widetilde{h_{\gri; \pr{s_{j}}_{j\in\intent{1,m}}}}\colon S^m\to 
\widetilde{\Hi}
$ by 
\begin{equation}\label{eq:def_til_h_i}
 \widetilde{h_{\gri; 
\pr{s_{j}}_{j\in\intent{m+1,N}}}}\pr{s_1,\dots,s_m}
=\pr{h_{\gri,j}
 \pr{s_1,\dots,s_m,s_{j}}\ind{\gri\in\inc^m_{j-1} }    
}_{j\in\intent{m+1,N}}.
\end{equation}
Consequently, \eqref{eq:inequalite_exp_deg_Ustats_m+1_etape_2} can be 
rephrased as 
\begin{multline}\label{eq:inequalite_exp_deg_Ustats_m+1_etape_3}
\PP\pr{ \max_{m+1\leq n\leq N}\norm{ 
\sum_{\gri\in\inc^{m+1}_n}h_{\gri}
 \pr{\xi_{\gri}^{\dec}  }}_{\Hi}  >x}
 \leq 4   \exp\pr{-\pr{\frac{x}{y}}^{\frac{2}{m+1}}}\\ 
 +4\int_1^\infty u\int_{S^{N-m}  
}\PP\pr{  \norm{\sum_{\gri\in\inc^m_{N-1}}
 \widetilde{h_{\gri; 
\pr{s_{j }}_{j\in\intent{1,m}}}}
 \pr{\xi_{\gri}^{\dec}  }  
}_{\widetilde{\Hi}} >  
 y^{\frac{1}{m+1}}x^{\frac{m}{m+1}}\frac{u}{8} 
}d\PP_{\xi_{\intent{m+1,N}}^{\pr{m+1}}}\pr{s_{\intent{m+1,N}}} du.
\end{multline}
By assumption \eqref{eq:def_degeneree_m_plus_1}, the following 
equality takes place for $\PP_{\xi_m^{\pr{m+1}}}\otimes \cdots\otimes
\PP_{\xi_N^{\pr{m+1}}}$ almost every $\pr{s_{j}}_{j\in\intent{m+1,N}}$:
\begin{equation*}
\E{ \widetilde{h_{\gri; 
\pr{s_{j}}_{j\in\intent{m+1,N}}}}  
\pr{\xi_{\intent{1,m}}}\mid \xi_{\intent{1,m}\setminus \ens{j_0} 
} }=0.
\end{equation*}
Therefore, an application of $P\pr{m}$ for such $s_{j}$, fixed $u\geq 1$ and 
$x$ 
and $y$ 
replaced respectively by 
\begin{equation*}
 x_2=y^{\frac{1}{m+1}}x^{\frac{m}{m+1}}\frac{u}{8}
, \quad 
 y_2=\frac{yu }{8\pr{1+\log u}^{\frac m2}},
\end{equation*}
gives 
\begin{multline}\label{eq:inequalite_exp_deg_Ustats_m+1_etape_4}
\PP\pr{ \max_{m+1\leq n\leq N}\norm{ 
\sum_{\gri\in\inc^{m+1}_n}h_{\gri}
 \pr{\xi_{\gri}^{\dec}}   }_{\Hi}  >x}\leq 4   
\exp\pr{-\pr{\frac{x}{y}}^{\frac{2}{m+1}}}
\\ 
 +2 a_m\int_1^\infty u
 \exp\pr{-\pr{\frac xy}^{\frac{2}{m+1}}\pr{1+\log u}  } du
+b_m \int_{S^{N-m}}  \int_1^\infty  \int_1^\infty uv\pr{1+\log 
v}^{p_m} \\
 \PP\pr{ \sqrt{ \sum_{\gri\in\inc^m_{N-1}}
 \norm{\widetilde{h_{\gri; 
\pr{s_{j }}_{j\in\intent{m+1,N}}}}
 \pr{\xi_{\gri}^{\dec }  }
}_{\widetilde{\Hi}}^2} >\frac{yu vc_m}{8\pr{1+\log u}^{\frac m2}}    }  
dudv d\PP_{\xi_{\intent{m+1,N}}^{\pr{m+1}}}\pr{s_{\intent{m+1,N}}}.
\end{multline} 
Since $x/y>3^{\pr{m+1}/2}$,  we get 
\begin{equation*}
2 a_m\int_1^\infty u
 \exp\pr{-\pr{\frac xy}^{\frac{2}{m+1}}\pr{1+\log u}  } du
 \leq 2a_m
\exp\pr{-\pr{\frac xy}^{\frac{2}{m+1}}} . 
\end{equation*}
Moreover, 
using the expression of $\norm{\cdot}_{\widetilde{\Hi}}$ and  
\eqref{eq:def_til_h_i} allows us to derive from  
\eqref{eq:inequalite_exp_deg_Ustats_m+1_etape_4} that   
\begin{multline}\label{eq:inequalite_exp_deg_Ustats_m+1_etape_5}
\PP\pr{ \max_{m+1\leq n\leq N}\norm{ 
\sum_{\gri\in\inc^{m+1}_n}h_{\gri}
 \pr{\xi_{\gri}^{\dec}  }}_{\Hi}  >x}\leq 
 \pr{4+2a_m}   
\exp\pr{-\pr{\frac{x}{y}}^{\frac{2}{m+1}}}
\\
+b_m \int_{S^{N-m}}  \int_1^\infty  \int_1^\infty 
 uv\pr{1+\log v}^{p_m}    \\
\PP\pr{ 
\sum_{\gri\in\inc^m_{N-1}}\sum_{j=i_m}^N
 \norm{ h_{\gri,j}
 \pr{\xi_{\gri}^{\dec},s_{j } } }_{ \Hi}^2>\frac{yu vc_m}{8\pr{1+\log u}^{m/2}} 
    } dudv
d\PP_{\xi_{\intent{m+1,N}}^{\pr{m+1}}}\pr{s_{\intent{m+1,N}}}
\\
=  \pr{4+2a_m}   
\exp\pr{-\pr{\frac{x}{y}}^{\frac{2}{m+1}}}
+b_m \int_1^\infty  \int_1^\infty  uv\pr{1+\log v}^{p_m}g\pr{\frac{yu 
vc_m}{\pr{1+\log u}^{m/2}} }
dudv,
\end{multline} 
where 
\begin{equation*}
 g\pr{t}= \PP\pr{ \sqrt{ \sum_{\gri\in\inc^{m+1}_{N}}
 \norm{ h_{\gri}
 \pr{\xi_{\gri}^{\dec} }  
}_{ \Hi}^2} >\frac t8  }.
\end{equation*}
Doing the substitution $w=u\pr{1+\log u}^{-m/2}v$ for a fixed $u\geq 1$, it 
suffices to prove the existence of constants $\kappa_{m}$ and  
$\kappa'_{m}$ for which for each $w\in\R$, 
\begin{multline}\label{eq:last_integral_bound}
I\pr{w}:= \int_1^\infty \ind{u\pr{1+\log u}^{-m/2}<w}\frac 1u\pr{1+\log u}^m
 \pr{1+\log\pr{\frac{w\pr{1+\log u}^{m/2}}{u}}  }^{p_m}du
\\
 \leq \kappa_m \ind{w>\kappa'_m}\pr{1+\log w}^{p_{m+1}}.
\end{multline}
Let us show \eqref{eq:last_integral_bound}. From the observation that there 
exists a constant $\kappa_{1,m}$ 
depending only on $m$ such that for each $u\geq 1$, $\sqrt{u}\leq \kappa_{1,m} 
u\pr{1+\log u}^{-m/2}$, we derive that $I\pr{\omega}$ vanishes if 
$w<\pr{\kappa_{1,m}}^{-1}$, hence 
\begin{equation*} 
 I\pr{w}
 \leq   \ind{w>\pr{\kappa_{1,m}}^{-1}} 
 \int_1^{w^2\pr{\kappa_{1,m}}^2}\frac 1u\pr{1+\log u}^m
 \pr{1+\log\pr{\frac{w\pr{1+\log u}^{\frac m2}}{u}}  }^{p_m}du.
\end{equation*}
Moreover, using that $\pr{1+\log u}^{m/2}/u$ can be bounded independently of 
$u\geq 1$, we get for some constant $\kappa_{2,m}$ that 
\begin{multline}\label{eq:last_integral_bound_ter}
I\pr{w}
 \leq   \kappa_{2,m}\ind{w>1} 
 \int_1^{w^2\pr{\kappa_{1,m}}^2}\frac 1u\pr{1+\log u}^m
 \pr{1+ \log w  }^{p_m}du\\+\kappa_{2,m}
 \ind{\pr{\kappa_{1,m}}^{-1}<w\leq 1}
 \int_1^{ \pr{\kappa_{1,m}}^2}\frac 1u\pr{1+\log u}^m
 \pr{1+\log\pr{\frac{\pr{1+\log u}^{\frac m2}}{u}}  }^{p_m}du
\end{multline}
 which gives \eqref{eq:last_integral_bound}. 
 This ends the proof of Theorem~\ref{thm:ineg_exp_deg_Ustat}.
 \subsection{Proof of Theorem~\ref{thm:dev_ineg_fixed_kernel}}

Starting from \eqref{eq:decomposition_Hoeffding_deg_d}, one has 
\begin{equation*}
 \max_{m\leq n\leq N}\norm{U_{m,n}\pr{h}}_{\Hi}\leq 
\sum_{k=d}^{m}\binom{m}{k} \max_{k\leq n\leq N}
\norm{U_{k,n}\pr{h_k}}_{\Hi}
\frac{\binom nm}{\binom nk} \leq \kappa_m \sum_{k=d}^{m}N^{m-k}\max_{k\leq 
n\leq 
N}\norm{U_{k,n}\pr{h_k}}_{\Hi},
\end{equation*}
 where $h_k$ is defined as in 
\eqref{eq:def_hk} and $\kappa_m$ depends only on $m$, hence 
\begin{equation}\label{eq:tail_after_Hoeffding}
\PP\pr{\max_{m\leq n\leq N}\norm{U_{m,n}\pr{h}}_{\Hi}>xN^{m-\frac d2}}
\leq  \sum_{k=d}^{m}
\PP\pr{\max_{k\leq n\leq N}\norm{U_{k,n}\pr{h_k}}_{\Hi}>xN^{k-\frac 
d2}\kappa_m/m   
}.
\end{equation}
For each $k\in\intent{d,m}$, we apply Theorem~\ref{thm:ineg_exp_deg_Ustat} in 
the following 
context: $m_k=k$, $h_{\gri}=h_k$, 
$x_k=xN^{k-d/2}\kappa_m/m$ and 
$y_k=N^{k-d/2}x^{1-k/d}\kappa_m/m$. We obtain 
in view of \eqref{eq:tail_after_Hoeffding} that 
\begin{multline*}
\PP\pr{\max_{m\leq n\leq N}\norm{U_{m,n}\pr{h}}_{\Hi}>xN^{m-\frac d2}}
\leq  \sum_{k=d }^{m} A_k\exp\pr{-\pr{\frac{x}{y}}^{\frac 2 d }}\\+B_k
\int_1^\infty u\pr{1+ \log u  
}^{\frac{m\pr{m+1}}2-1}\PP\pr{\sum_{\gri\in\inc^k_N     }
\norm{h_k\pr{\xi_{\gri}^{\dec}  }   }_{\Hi}^2> 
\pr{\frac{C_k}mN^{k-\frac d2}x^{1-\frac kd}\kappa_mu}^2 }du,
\end{multline*}
where $A_k,B_k,C_k,k\in\intent{d+1,m}$ are as in 
Theorem~\ref{thm:ineg_exp_deg_Ustat}.
 Also, since the random variables 
$\norm{h_k\pr{\xi_{\gri}^{\dec}  }   }_{\Hi}^2$, 
$\gri\in\inc^k_N  $, have the 
 same distribution, one has 
 \begin{equation}
 \sum_{\gri\in\inc^k_N     }
\norm{h_k\pr{\xi_{\gri}^{\dec} }   }_{\Hi}^2\conv 
N^k\norm{h_k\pr{\xi_{\intent{1,k}   } }}_{\Hi}^2\conv N^k\kappa_k H_k, 
 \end{equation}
where $\conv$ is defined as in Definition~\ref{def:conv_order} and 
$\kappa_k$ depends only on $k$.
Therefore, we infer from \eqref{eq:conv_ordering_tails_integrales} that 
\begin{multline*}
\PP\pr{\max_{m\leq n\leq N}\norm{U_{m,n}\pr{h}}_{\Hi}>xN^{m-\frac d2}}
\leq  \sum_{k=d}^{m} A_k\exp\pr{-\pr{\frac{x}{y}}^{\frac 
2d}}\\+\sum_{k=d}^{m}B_k
\int_1^\infty\int_1^\infty u\pr{1+ \log u  
}^{p_m}\PP\pr{ N^k
\norm{h_k\pr{\xi_{\intent{1,k}}   }   }_{\Hi}^2> 
\pr{\frac{C_k}mN^{\frac{2k-d}2}x^{\frac{d-k}d}\kappa_mu}^2\frac{v}{4} }dudv\\
=\sum_{k=d}^{m} A_k\exp\pr{-\pr{\frac{x}{y}}^{\frac 2d}} +\sum_{k=d}^{m}B_k
\int_1^\infty\int_1^\infty u\pr{1+ \log u  
}^{p_m}\PP\pr{  
H > 
 \frac{C_k\kappa_m}{2m}N^{\frac{k-d}2}x^{\frac{d-k}d} u \sqrt{v} }dudv.
\end{multline*}
Lemma~2.2 in \cite{MR4294337} gives 
\begin{equation}\label{eq:borne_integrale_double}
\int_1^\infty\int_1^\infty u\pr{1+ \log u  }^{p_m}
g\pr{u\sqrt v}dudv
\leq \kappa_m \int_1^\infty u \pr{1+ \log u  }^{p_{m=1}}
g\pr{u/\kappa_m},
\end{equation}
which concludes the proof of Theorem~\ref{thm:dev_ineg_fixed_kernel}.
 
\subsection{Proof of Corollary~\ref{cor:weighted_Ustat_non_deg}}
Starting from the decomposition \eqref{eq:Hoeffding_weighted}, 
one has 
\begin{equation*}
\PP\pr{\norm{\sum_{\gri\in\inc_n^m 
}T_{\gri}\pr{h\pr{\xi_{\gri} }} }_{\Hi}>x } \leq 
\sum_{k=d}^m\PP\pr{\norm{\sum_{\gri\in\inc_k^m 
}a_{\gri}^{\pr{n,k}}\pr{h_k\pr{\xi_{\gri }}} }_{\Hi}>x/m 
}.
\end{equation*}
For each fixed $k\in\intent{d,m}$, we apply 
Theorem~\ref{thm:ineg_exp_deg_Ustat} with $m$ replaced by $k$, 
$y_k=x^{1-k/d}y^{k/d}$ and 
$h_{\gri}\pr{s_{1},\dots,s_k}=a_{\gri}^{\pr{n,k}}\pr{h_k\pr{
s_1,\dots,s_k}}$.
We derive that 
\begin{multline*}
\PP\pr{\norm{\sum_{\gri\in\inc_n^m 
}T_{\gri}\pr{h\pr{\xi_{\gri}}} }_{\Hi}>x } \leq 
\sum_{k=d}^mA_k
\exp\pr{-\pr{\frac{x}{y}}^{\frac 2d }}\\+
\sum_{k=d}^mB_k\int_1^{\infty}u\pr{1+\log u}^{k\pr{k-1}/2-1} 
\PP\pr{\sqrt{ 
\sum_{\gri\in\inc_k^m }\norm{a_{\gri}^{\pr{n,k}}
\pr{h_k\pr{\xi_{\gri}^{\dec} }}}_{\Hi}^2} 
>\frac{C_k}mx^{\frac{d-k}{d} }y^{\frac k d }u  }du.
\end{multline*}
For a convex increasing function $\varphi\colon\R\to\R$, one has 
\begin{align*}
\E{\varphi\pr{ \sum_{\gri\in\inc_k^m 
}\norm{a_{\gri}^{\pr{n,k}}\pr{h_k\pr{\xi_{\gri}^{\dec} }}  }_{\Hi}^2}}
&\leq \E{\varphi\pr{A_{n,k}^2 \sum_{\gri\in\inc_k^m 
}\frac{\norm{a_{\gri}^{\pr{n,k}}}_{\Bca\pr{\Hi}}^2}{A_{n,k}^2} 
\norm{h_k\pr{\xi_{\gri}^{\dec} }  }  _{\Hi}^2}}\\
&\leq \E{ \sum_{\gri\in\inc_k^m 
}\frac{\norm{a_{\gri}^{\pr{n,k}}}_{\Bca\pr{\Hi}}^2}{A_{n,k}^2} 
\varphi\pr{A_{n,k}^2\norm{h_k\pr{\xi_{\gri}^{\dec} } 
}  
_{\Hi}^2}}\\
&=\E{\varphi\pr{A_{n,k}^2\norm{h_k\pr{\xi_{\intent{1,k} }}}_{\Hi}^2}}\\
&\leq \E{\varphi\pr{A_{n,k}^2\kappa_m H^2}}\\
&\leq \E{\varphi\pr{ 
n^{m-k}\sum_{\gri\in\inc^m_n}\norm{T_{\gri}}_{\Bca\pr{\Hi}}^2\kappa_m H^2}}\\
&\leq \E{\varphi\pr{ 
n^{m-d}\sum_{\gri\in\inc^m_n}\norm{T_{\gri}}_{\Bca\pr{\Hi}}^2\kappa_m H^2}}
\end{align*}
hence a use of \eqref{eq:conv_ordering_tails_integrales} gives 
\begin{multline*}
\PP\pr{\norm{\sum_{\gri\in\inc_n^m 
}T_{\gri}\pr{h\pr{\xi_{\gri} }} }_{\Hi}>x } \leq 
\sum_{k=d}^mA_k
\exp\pr{-\pr{\frac{x}{y}}^{2/ d}}\\+
\sum_{k=d}^mB_k\int_1^{\infty}\int_1^{\infty}u\pr{1+\log u}^{k\pr{k-1}/2-1}
\PP\pr{A_{n,k}\norm{h_k\pr{\xi_{\intent{1,k}}}}_{\Hi}
>x^{\frac{d-k}d} y^{\frac kd}u\sqrt{v}/\pr{4m} }dudv.
\end{multline*}
We conclude by a use of \eqref{eq:borne_integrale_double}. 

\subsection{Proof of the results of Subsection~\ref{subsec:incomplete}}

Before going into the proofs of each corollary, let us mention the common 
elements of proof.

Define for $k\in\intent{d,m}$ and $\gri\in\inc^k_n$ the random variable
\begin{equation*}
Z_{\gri}^{\pr{k}}:=\sum_{\grj \in  I_k\pr{\gri}}Z_{n,\gr{j}},
\end{equation*}
where $ I_k\pr{\gri}$ denotes the set of indices 
$\grj=\pr{j_\ell}_{\ell=1}^m\in\inc^m_n$ such that on a set 
$K=\ens{\ell_1,\dots,\ell_k}$ of cardinal $k$, 
one has $j_{\ell_1}=i_1,\dots,j_{\ell_k}=i_k$. By symmetry of $h$, we have 
\begin{equation}
\sum_{\gri\in\inc_n^m}Z_{n,\gri}h\pr{\xi_{\gri}}
=\sum_{k=d}^m\sum_{\gri\in\inc_n^k}Z_{n,\gri}^{\pr{k}}h_k\pr{\xi_{\gri}}.
\end{equation}
This decomposition reduces us to show the existence of constants $a_m,b_m$ and 
$c_m$ depending only on $m$ such that for each $k\in\intent{d,m}$ and $x,y$ 
such that $x/y>1$, 
\begin{multline}\label{eq:ineg_pour_cor_inc_replacement}
 \PP\pr{\norm{\sum_{\gri\in\inc_n^k}Z_{n,\gri}^{\pr{k}}h_k\pr{\xi_{\gri}}    
}_{\Hi}>x \sqrt{N}\sqrt{\min\ens{N,n^{m-d}  }}  }\leq 
a_m\exp\pr{-\pr{\frac{x}{y}}^{\frac 2m}}\\
+b_m \int_1^\infty  u\pr{1+\log u}^{m\pr{m+1}/2}
\PP\pr{ H>c_m yu}du\mbox{ and }
\end{multline}
\begin{multline} \label{eq:ineg_pour_cor_inc_Bernoulli}
 \PP\pr{\norm{\sum_{\gri\in\inc_n^k}Z_{n,\gri}^{\pr{k}}h_k\pr{\xi_{\gri}}    
}_{\Hi}>x n^{m}\sqrt{p_n}\sqrt{\min\ens{p_n,n^{-d}  }}  }
\\
\leq 
a_m\exp\pr{-\frac{n^mp_n^2}2}+ a_m\exp\pr{-\pr{\frac{x}{y}}^{\frac 2m}}
+b_m \int_1^\infty  u\pr{1+\log u}^{m\pr{m+1}/2}
\PP\pr{H>c_m yu}du.
\end{multline}
\begin{proof}[Proof of 
Corollary~\ref{cor:deviation_incomplete_Ustats_replacement}]
By independence between $\pr{Z_{n,\gri}}_{\gri\in\inc^m_n}$ and 
$\pr{\xi_i}_{i\geq 1}$, 
one has 
\begin{multline*}
\PP\pr{\norm{\sum_{\gri\in\inc_n^k}Z_{n,\gri}^{\pr{k}}h_k\pr{\xi_{\gri}}}_{\Hi}>
x}\\
=\sum_{\pr{z_{\gri}}_{\gri\in\inc^k_n}\in \ens{0,1}^{C^k_n}  }
\PP\pr{\norm{\sum_{\gri\in\inc_n^k}  \sum_{\grj \in  I_k\pr{\gri}} z_{\grj}  
h_k\pr{\xi_{\gri}}}_{\Hi}>x}\PP\pr{\pr{Z_{n,\gri}}_{\gri\in\inc_n^k} 
=\pr{z_{\gri}}_{\gri\in\inc_n^k}}.
\end{multline*}
Using Theorem~\ref{thm:ineg_exp_deg_Ustat} for fixed 
$\pr{z_{\gri}}_{\gri\in\inc_n^k}$, 
we find that for $y_k$ that will be specified later, 
\begin{multline*}
\PP\pr{\norm{\sum_{\gri\in\inc_n^k}Z_{n,\gri}^{\pr{k}}h_k\pr{\xi_{\gri}}}_{\Hi}>
x}
\leq a_k\exp\pr{-\pr{\frac x{y_k}}^{\frac{2}{k}}}\\
+b_k\sum_{\pr{z_{\gri}}_{\gri\in\inc^k_n}\in \ens{0,1}^{C^k_n}  
}\PP\pr{\pr{Z_{n,\gri}}_{\gri\in\inc_n^k} =\pr{z_{\gri}}_{\gri\in\inc_n^k}}\\  
\int_1^\infty u \pr{1+\log u}^{k\pr{k-1}/2}
\PP\pr{\sum_{\gri\in\inc_n^k}  \norm{\sum_{\grj \in  I_k\pr{\gri}} z_{\grj}  
h_k\pr{\xi^{\dec}_{\gri}}}_{\Hi}^2>u^2y_k^2}du.
\end{multline*}
Let $A:=\sum_{\gri\in\inc_n^k}\pr{\sum_{\grj \in  I_k\pr{\gri}} z_{\grj} }^2  $.
For a convex increasing function $\varphi\colon\R\to\R$, one has 
\begin{align*}
\E{\varphi\pr{{\sum_{\gri\in\inc_n^k}  \norm{\sum_{\grj \in  I_k\pr{\gri}} 
z_{\grj}  h_k\pr{\xi_{\gri}^{\dec}}}_{\Hi}^2}}}&=
\E{\varphi\pr{\frac 1A\sum_{\gri\in\inc_n^k}\pr{\sum_{\grj \in  I_k\pr{\gri}} 
z_{\grj} }^2  A\norm{ h_k\pr{\xi^{\dec}_{\gri}}}_{\Hi}^2}}\\
&\leq \E{\frac 1A\sum_{\gri\in\inc_n^k}\pr{\sum_{\grj \in  I_k\pr{\gri}} 
z_{\grj} }^2  \varphi\pr{A\norm{ h_k\pr{\xi^{\dec}_{\gri}}}_{\Hi}^2}}\\
&\leq \E{\frac 1A\sum_{\gri\in\inc_n^k}\pr{\sum_{\grj \in  I_k\pr{\gri}} 
z_{\grj} }^2  \varphi\pr{A \kappa_mH^2}}
\end{align*}
hence $\sum_{\gri\in\inc_n^k}  \norm{\sum_{\grj \in  I_k\pr{\gri}} z_{\grj}  
h_k\pr{\xi_{\gri}}}_{\Hi}^2 \conv A \kappa_mH^2$. Since $\sqrt{\sum c_i} \leq 
\sum \sqrt{c_i}$, one has $A\leq N^2$. Moreover, by Cauchy-Schwarz, $A\leq 
n^{m-k}N\leq 
n^{m-d}N$. Overall, 
\begin{equation*}
\sum_{\gri\in\inc_n^k}  \norm{\sum_{\grj \in  I_k\pr{\gri}} z_{\grj}  
h_k\pr{\xi^{\dec}_{\gri}}}_{\Hi}^2 \conv N\min\ens{N,n^{m-d}} \kappa_mH^2
\end{equation*}
hence by \eqref{eq:conv_ordering_tails_integrales} and 
\eqref{eq:borne_integrale_double}, we get 
\begin{multline*}
\PP\pr{\norm{\sum_{\gri\in\inc_n^k}Z_{n,\gri}^{\pr{k}}h_k\pr{\xi_{\gri}}}_{\Hi}>
x}
\leq a_k\exp\pr{-\pr{\frac x{y_k}}^{2/k}}\\
+b_k \int_1^\infty u \pr{1+\log u}^{k\pr{k+1}/2} \PP\pr{  \sqrt{ 
N\min\ens{N,n^{m-d}}}\kappa'_mH>y_ku }du.
\end{multline*}
Bounding the exponent $k\pr{k+1}/2$ by $m\pr{m+1}/2$, choosing $y_k$ such that 
$\pr{\frac x{y_k}}^{2/k}=\pr{\frac x{y}}^{2/m}$ and using the fact that 
$x/y>1$, one gets 
\begin{multline*}
\PP\pr{\norm{\sum_{\gri\in\inc_n^k}Z_{n,\gri}^{\pr{k}}h_k\pr{\xi_{\gri}}}_{\Hi}>
x}
\leq a_k\exp\pr{-\pr{\frac x{y}}^{2/m}}\\
+b_k \int_1^\infty u \pr{1+\log u}^{m\pr{m+1}/2} \PP\pr{  \sqrt{ 
N\min\ens{N,n^{m-d}}}\kappa'_mH>y u }du
\end{multline*}
from which we derive \eqref{eq:ineg_pour_cor_inc_replacement} by applying the 
previous 
inequality with $x$ and $y$ replaced respectively  by $x 
\pr{N\min\ens{N,n^{m-d}}}^{-1/2}$  
and $y \pr{N\min\ens{N,n^{m-d}}}^{-1/2}$.
\end{proof}
\begin{proof}[Proof of 
Corollary~\ref{cor:deviation_incomplete_Ustats_Bernoulli}]
We start from 
\begin{equation*}
\PP\pr{\norm{\sum_{\gri\in\inc_n^k}Z_{n,\gri}^{\pr{k}}h_k\pr{\xi_{\gri}}    
}_{\Hi}>x n^{m}\sqrt{p_n}\sqrt{\min\ens{p_n,n^{-d}  }}  }\leq P_1+P_2,
\end{equation*}
where 
\begin{equation*}
P_1:=\PP\pr{\sum_{\gri\in\inc^m_n}Z_{n,\gri}>2n^mp_n  } \mbox{ and}
\end{equation*}
\begin{equation*}
P_2:=\PP\pr{\ens{\norm{\sum_{\gri\in\inc_n^k}Z_{n,\gri}^{\pr{k}}h_k\pr{\xi_{\gri
}}    
}_{\Hi}>x n^{m}\sqrt{p_n}\sqrt{\min\ens{p_n,n^{-d}  }}  }
\cap \ens{\sum_{\gri\in\inc^m_n}Z_{n,\gri}\leq 2n^mp_n  } }.
\end{equation*}
The term $P_1$ is bounded,
via Azuma-Hoeffding's inequality, by $2\exp\pr{-n^mp_n^2/2}$. 
Using independence between $\pr{Z_{n,\gri}}_{\gri\in\inc^m_n}$ and 
$\pr{\xi_i}_{i\geq 1}$, we get 
\begin{multline*}
P_2=\sum_{\pr{z_{\gri}}_{\gri\in\inc^k_n}\in \ens{0,1}^{C^k_n}  }
\PP\pr{\norm{\sum_{\gri\in\inc_n^k}  \sum_{\grj \in  I_k\pr{\gri}} z_{\grj}  
h_k\pr{\xi_{\gri}}}_{\Hi}>xn^{m}\sqrt{p_n}\sqrt{\min\ens{p_n,n^{-d}  }}}
\\ \ind{
\sum_{\gri\in\inc^m_n}z_{\gri}\leq 2n^mp_n } 
\PP\pr{\pr{Z_{n,\gri}}_{\gri\in\inc_n^k} =\pr{z_{\gri}}_{\gri\in\inc_n^k}}.
\end{multline*}
We apply Theorem~\ref{thm:ineg_exp_deg_Ustat} for fixed 
$\pr{z_{\gri}}_{\gri\in\inc^k_n}$ and get 
\begin{multline*}
P_2\leq a_k\exp\pr{-\pr{\frac{x}{y_k}}^{2/k}}
+b_k \int_1^{\infty} u\pr{1+\log 
u}^{\frac{k\pr{k-1}}{2}}\sum_{\pr{z_{\gri}}_{\gri\in\inc^k_n}\in 
\ens{0,1}^{C^k_n}  }\PP\pr{\pr{Z_{n,\gri}}_{\gri\in\inc_n^k} 
=\pr{z_{\gri}}_{\gri\in\inc_n^k}} \\ \PP\pr{\sum_{\gri\in\inc_n^k}  
\norm{\sum_{\grj \in  I_k\pr{\gri}} z_{\grj}  
h_k\pr{\xi^{\dec}_{\gri}}}_{\Hi}^2>u^2\pr{xn^{m}}^2 p_n \min\ens{p_n,n^{-d}  }} 
\ind{
\sum_{\gri\in\inc^m_n}z_{\gri}\leq 2n^mp_n } 
du.
\end{multline*}
For $\pr{z_{\gri}}_{\gri\in\inc^m_n}$ such that 
$\sum_{\gri\in\inc^m_n}z_{\gri}\leq 2n^mp_n$ and a convex non-decreasing 
function $\varphi\colon\R\to\R$, letting 
$A:=\sum_{\gri\in\inc_n^k} \pr{\sum_{\grj \in  I_k\pr{\gri}}}^2$, one has 
\begin{align*}
\E{\varphi\pr{\sum_{\gri\in\inc_n^k}  \norm{\sum_{\grj \in  I_k\pr{\gri}} 
z_{\grj}  h_k\pr{\xi^{\dec}_{\gri}}}_{\Hi}^2}}&=
\E{\varphi\pr{\frac 1A\sum_{\gri\in\inc_n^k} \pr{\sum_{\grj \in  I_k\pr{\gri}} 
z_{\grj}}^2  A\norm{ h_k\pr{\xi^{\dec}_{\gri}}}_{\Hi}^2}}\\
&\leq 
\E{\frac 1A\sum_{\gri\in\inc_n^k} \pr{\sum_{\grj \in  I_k\pr{\gri}} z_{\grj}}^2 
\varphi\pr{ A\norm{ h_k\pr{\xi^{\dec}_{\gri}}}_{\Hi}^2}}\\
&\leq \E{\varphi\pr{ A\kappa_mH^2}}.
\end{align*}
Since $\sum_{\gri\in\inc^m_n}z_{\gri}\leq 2np_n$, an application of 
 $\sqrt{\sum c_i} \leq \sum \sqrt{c_i}$ gives that $A^2\leq 2n^mp_n$. Moreover, 
 convexity of the square gives $A\leq n^{m-k} 2n^mp_n\leq 2n^{2m-d}p_n$. The 
combination of the obtained estimates gives 
 $\sum_{\gri\in\inc_n^k}  \norm{\sum_{\grj \in  I_k\pr{\gri}} z_{\grj}  
h_k\pr{\xi^{\dec}_{\gri}}}_{\Hi}^2\conv \kappa_m
  \min\ens{\sqrt{2n^mp_n},2n^{2m-d}p_n} H^2$. We then conclude similarly as in 
the proof of Corollary~\ref{cor:deviation_incomplete_Ustats_replacement}: we 
use 
  \eqref{eq:conv_ordering_tails_integrales}, then 
\eqref{eq:borne_integrale_double}
  and take $y_k$ making the terms $\exp\pr{-\pr{\frac{x}{y_k}}^{2/k}}, 
k\in\intent{d,m}$, equal and finally, the fact that $x/y$ is bigger than one 
allows to find a bound independent 
  of $k$. This ends the proof of 
Corollary~\ref{cor:deviation_incomplete_Ustats_Bernoulli}.
\end{proof}
\begin{appendix}
 \section{A deviation inequality for Hilbert-valued martingale differences}
 \subsection{Real-valued case}
 In this  section, we collect existing results in the literature in order to 
derive 
an exponential inequality for real-valued martingale difference sequence. The 
tail of the maxima of absolute values of a martingale are controlled via an 
exponential term and the tail of the sum of squares. This will be the starting 
point

\cite{MR3311214} showed the following.
\begin{Theorem}[Theorem~2.1 in \cite{MR3311214}]\label{thm:Fan_Grama_Liu}
Let $\pr{D_j}_{j\geq 1}$ be a real-valued martingale difference sequence 
with respect to the filtration $\pr{\Fca_j}_{j\geq 0}$. Suppose that 
there exists random variables $V_{j-1}$, $j\in\intent{1,n}$, which are 
non-negative and
$\Fca_{j-1}$-measurable, non-negative functions $f$ and $g$ such that for some 
positive $\lambda$ and for all $j\in\intent{1,n}$,
\begin{equation}\label{eq:condition_sur_les_accroissements}
\E{\exp\pr{\lambda D_j-g\pr{\lambda}D_j^2}\mid \Fca_{j-1}}\leq 1+f\pr{\lambda}
V_{j-1}.
\end{equation}
Then for all $x$, $v$, $w>0$, 
\begin{equation}\label{eq:inegalite_Fan_grama_Liu}
\PP\pr{\bigcup_{k=1}^n \ens{\sum_{j=1}^kD_j\geq x}\cap 
\ens{\sum_{j=1}^kD_j^2\leq v^2   }\cap\ens{\sum_{j=1}^kV_{j-1}\leq w   }}\\
\leq \exp\pr{-\lambda x+g\pr{\lambda}v^2+f\pr{\lambda}w}.
\end{equation}
\end{Theorem}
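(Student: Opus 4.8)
The plan is to run the classical exponential supermartingale argument, with a multiplicative compensator tailored to \eqref{eq:condition_sur_les_accroissements}. I fix the value of $\lambda$ for which the hypothesis holds. First I would introduce the process
\[
Z_k = \exp\pr{\lambda \sum_{j=1}^k D_j - g\pr{\lambda}\sum_{j=1}^k D_j^2} \prod_{j=1}^k \pr{1+f\pr{\lambda}V_{j-1}}^{-1}, \qquad Z_0=1,
\]
and check that $\pr{Z_k}$ is a non-negative supermartingale with respect to $\pr{\Fca_k}$. Indeed, since $V_{k-1}$ is $\Fca_{k-1}$-measurable,
\[
\E{Z_k\mid\Fca_{k-1}}=Z_{k-1}\pr{1+f\pr{\lambda}V_{k-1}}^{-1}\E{\exp\pr{\lambda D_k-g\pr{\lambda}D_k^2}\mid\Fca_{k-1}}\leq Z_{k-1},
\]
the last inequality being exactly \eqref{eq:condition_sur_les_accroissements}; integrability and $\E{Z_k}\leq 1$ then follow by induction on $k$.

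Next I would localise the union over $k$ by a stopping time. Writing $A_k$ for the event $\ens{\sum_{j=1}^k D_j\geq x}\cap\ens{\sum_{j=1}^k D_j^2\leq v^2}\cap\ens{\sum_{j=1}^k V_{j-1}\leq w}$, each $A_k$ is $\Fca_k$-measurable, so $T:=\inf\ens{k\geq 1 : A_k \text{ holds}}$ is a stopping time and the event whose probability we must bound is precisely $\ens{T\leq n}$. The point of the construction is that on $A_T$ all three constraints hold simultaneously at the single random index $T$: using $\sum_{j=1}^T D_j\geq x$, then $\sum_{j=1}^T D_j^2\leq v^2$, and finally, via $1+t\leq e^t$, $\prod_{j=1}^T\pr{1+f\pr{\lambda}V_{j-1}}\leq\exp\pr{f\pr{\lambda}\sum_{j=1}^T V_{j-1}}\leq\exp\pr{f\pr{\lambda}w}$, I obtain the pointwise lower bound $Z_T\geq\exp\pr{\lambda x-g\pr{\lambda}v^2-f\pr{\lambda}w}$ on $\ens{T\leq n}$.

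Finally I would combine these two facts through optional stopping at the bounded time $T\wedge n$: by the supermartingale property $\E{Z_{T\wedge n}}\leq Z_0=1$, and since $Z$ is non-negative,
\[
1\geq\E{Z_{T\wedge n}}\geq\E{Z_{T\wedge n}\ind{T\leq n}}=\E{Z_T\ind{T\leq n}}\geq\exp\pr{\lambda x-g\pr{\lambda}v^2-f\pr{\lambda}w}\PP\pr{T\leq n}.
\]
Rearranging gives $\PP\pr{T\leq n}\leq\exp\pr{-\lambda x+g\pr{\lambda}v^2+f\pr{\lambda}w}$, which is \eqref{eq:inegalite_Fan_grama_Liu}.

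The computations here are all routine; the only genuinely delicate points are choosing the multiplicative compensator $\prod\pr{1+f\pr{\lambda}V_{j-1}}^{-1}$ so that \eqref{eq:condition_sur_les_accroissements} turns $Z$ into a supermartingale, and recognising that although $\sum_{j\leq k} D_j\geq x$ is not monotone in $k$ while $\sum_{j\leq k} D_j^2$ and $\sum_{j\leq k} V_{j-1}$ are non-decreasing, packaging the three conditions into a single stopping time lets the non-decreasing sums be controlled uniformly by $v^2$ and $w$ at the stopping index. The hard part, such as it is, is simply keeping the direction of every inequality consistent so that the supermartingale bound and the event lower bound point the same way.
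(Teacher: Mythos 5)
Your proof is correct: the exponential supermartingale $Z_k$ with the multiplicative compensator $\prod_{j\leq k}\pr{1+f\pr{\lambda}V_{j-1}}^{-1}$, the stopping time $T$ packaging the three constraints, and optional stopping at $T\wedge n$ together give exactly \eqref{eq:inegalite_Fan_grama_Liu}, and every inequality direction checks out (positivity of $\lambda$ and non-negativity of $f$, $g$, $V_{j-1}$ are used where needed). Note that the paper itself gives no proof of this statement --- it is quoted verbatim from Theorem~2.1 of \cite{MR3311214} --- and your argument is essentially the classical one used in that reference, so there is nothing further to reconcile.
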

 
 Notice that the condition \eqref{eq:condition_sur_les_accroissements} is 
satisfied 
 for all $\lambda>0$ when $f\pr{\lambda}=g\pr{\lambda}=\lambda^2/2$ and 
 $V_{j-1}=\E{D_j^2\mid\Fca_{j-1}}$ provided that $\E{D_j^2}$ is finite for each 
$j$. After having chosen $w=v^2$, optimized over 
 $\lambda$ the right hand side of \eqref{eq:inegalite_Fan_grama_Liu} 
 and applying Theorem~\ref{thm:Fan_Grama_Liu} to $D_j$ and then to 
  $-D_j$, we derive that  for all $x$ and $y>0$
 \begin{equation}\label{eq:ineg_exp_martingales_valeurs_reelle}
 \PP\pr{\max_{1\leq k\leq n}\abs{\sum_{j=1}^kD_j}>x   }
 \leq 2\exp\pr{-\frac{x^2}{ y^2}}+\PP\pr{\sum_{j=1}^n
 \pr{D_j^2+\E{D_j^2\mid \Fca_{j-1}}}>\frac{y^2}2}.
 \end{equation}
 An inequality in this spirit has been established by \cite{MR2462551}, 
 Theorem~2.1, without the max but the event involving the sum of squares 
 and conditional variances is in the probability in the left hand side.
 
 \subsection{Dimension reduction in Hilbert spaces}

An extension of \eqref{eq:inegalite_Fan_grama_Liu} to 
the Hilbert-valued case requires the following dimension reduction result.
This was done by \cite{MR1096481} in the continuous time case, 
and in Proposition 5.8.3 in \cite{MR1167198} in the discrete case.

\begin{Lemma}\label{lem:dimension_reduction}
Let $\pr{\Hi,\langle\cdot,\cdot\rangle}$ be a separable Hilbert space, let 
$\norm{x}_{\Hi}:=\sqrt{\langle x,x\rangle}$ and let $\pr{D_j}_{j\geq 1}$ be 
a $\Hi$-valued martingale difference sequence with respect to the filtration 
$\pr{\Fca_j}_{j\geq 0}$. Enlarging the probability space if needed, there 
exists an $\R^2$-valued
martingale difference sequence $\pr{\pr{d_{1,j},d_{2,j}}}_{j\geq 1}$ with 
respect to the filtration $\pr{\Gca_j}_{j\geq 0}$
such that for each $j\geq 1$, $\norm{D_j}_{\Hi}^2=
 d_{1,j}^2+d_{2,j}^2$ and
for each $n\geq 1$, 
\begin{equation}\label{eq:dim_red_cond_var}
\sum_{j=1}^n\E{\norm{D_j}_{\Hi}^2\mid\Fca_{j-1}}=\sum_{i=1}^n
\E{d_{j,1}^2\mid\Gca_{j-1}}
 +\sum_{j=1}^n\E{d_{j,2}^2\mid\Gca_{j-1}} \mbox{ and}
\end{equation}
\begin{equation}\label{eq:dim_red_norm_martingale}
\norm{\sum_{i=1}^nD_j}_{\Hi}^2=  \pr{\sum_{j=1}^nd_{1,j} }^2+
\pr{\sum_{j=1}^nd_{2,j} }^2  .
\end{equation}
\end{Lemma}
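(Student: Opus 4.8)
The plan is to build the $\R^2$-valued sequence explicitly, tracking at each step the one-dimensional projection of the increment $D_j$ onto the direction of the current partial sum $S_{j-1}:=\sum_{\ell=1}^{j-1}D_\ell$ (with $S_0:=0$), and supplying the missing orthogonal component with an independent random sign. On an enlarged probability space I would introduce a sequence $\pr{\eps_j}_{j\geq 1}$ of i.i.d.\ Rademacher variables, independent of $\pr{\Fca_j}_{j\geq 0}$, and set $\Gca_j:=\Fca_j\vee\sigma\pr{\eps_1,\dots,\eps_j}$. Writing $\pr{A_n,B_n}:=\sum_{j=1}^n\pr{d_{1,j},d_{2,j}}$, the construction is designed so that, by induction on $n$, $\norm{\pr{A_n,B_n}}=\norm{S_n}_{\Hi}$, the direction of $\pr{A_n,B_n}$ in $\R^2$ playing the role of the direction of $S_n$ in $\Hi$. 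Concretely, at step $j$ I set $e_{j-1}:=S_{j-1}/\norm{S_{j-1}}_{\Hi}$ and $u_{j-1}:=\pr{A_{j-1},B_{j-1}}/\norm{\pr{A_{j-1},B_{j-1}}}$ (with fixed conventions on the $\Fca_{j-1}$-measurable event $\ens{S_{j-1}=0}$, on which $\norm{\pr{A_{j-1},B_{j-1}}}=0$ as well), put $c_j:=\scal{e_{j-1}}{D_j}$ and $R_j:=\sqrt{\norm{D_j}_{\Hi}^2-c_j^2}\geq 0$, and define
\[
 \pr{d_{1,j},d_{2,j}}:=c_j\,u_{j-1}+\eps_j R_j\,u_{j-1}^{\perp},
\]
where $u_{j-1}^{\perp}$ is $u_{j-1}$ rotated by $\pi/2$. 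The identity $\norm{D_j}_{\Hi}^2=d_{1,j}^2+d_{2,j}^2$ is then immediate from $c_j^2+R_j^2=\norm{D_j}_{\Hi}^2$ and $\eps_j^2=1$.

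The martingale difference property I would verify coordinatewise. Since $u_{j-1}$ and $u_{j-1}^{\perp}$ are $\Gca_{j-1}$-measurable,
\[
 \E{\pr{d_{1,j},d_{2,j}}\mid\Gca_{j-1}}=u_{j-1}\,\E{c_j\mid\Gca_{j-1}}+u_{j-1}^{\perp}\,\E{\eps_j R_j\mid\Gca_{j-1}}.
\]
Because $\pr{\eps_j}$ is independent of the whole sequence $\pr{\Fca_j}$, the elementary property \eqref{eq:elem_prop_cond_exp} gives $\E{D_j\mid\Gca_{j-1}}=\E{D_j\mid\Fca_{j-1}}=0$, whence $\E{c_j\mid\Gca_{j-1}}=\scal{e_{j-1}}{\E{D_j\mid\Fca_{j-1}}}=0$; and conditioning first on $\Fca_j\vee\sigma\pr{\eps_1,\dots,\eps_{j-1}}$, on which $R_j$ is measurable while $\eps_j$ remains centered and independent, yields $\E{\eps_j R_j\mid\Gca_{j-1}}=0$. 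The same reduction shows $\E{d_{1,j}^2+d_{2,j}^2\mid\Gca_{j-1}}=\E{\norm{D_j}_{\Hi}^2\mid\Gca_{j-1}}=\E{\norm{D_j}_{\Hi}^2\mid\Fca_{j-1}}$; summing over $j$ and separating the two coordinates gives \eqref{eq:dim_red_cond_var}.

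The step I expect to be the crux is \eqref{eq:dim_red_norm_martingale}, and it is exactly what the choice of directions secures. Using the inductive hypothesis $\norm{\pr{A_{j-1},B_{j-1}}}=\norm{S_{j-1}}_{\Hi}$ and the fact that the component of $\pr{d_{1,j},d_{2,j}}$ along $u_{j-1}$ equals $c_j$,
\[
 \scal{\pr{A_{j-1},B_{j-1}}}{\pr{d_{1,j},d_{2,j}}}=\norm{\pr{A_{j-1},B_{j-1}}}\,c_j=\norm{S_{j-1}}_{\Hi}\,c_j=\scal{S_{j-1}}{D_j}.
\]
Expanding $\norm{\pr{A_j,B_j}}^2$ and $\norm{S_j}_{\Hi}^2$ by bilinearity and matching the cross terms through the display above, together with $\norm{\pr{d_{1,j},d_{2,j}}}^2=\norm{D_j}_{\Hi}^2$, propagates $\norm{\pr{A_j,B_j}}=\norm{S_j}_{\Hi}$ from $j-1$ to $j$, the case $n=0$ being trivial. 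The only point requiring care is the event $\ens{S_{j-1}=0}$: there both cross terms vanish and one simply takes $\pr{d_{1,j},d_{2,j}}$ to be any $\R^2$-vector of norm $\norm{D_j}_{\Hi}$ symmetrized by $\eps_j$, so that the recursion still closes.
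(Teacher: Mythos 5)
Your construction is correct, but note that the paper does not actually prove this lemma: it is imported from the literature, namely from \cite{MR1096481} (continuous time) and Proposition~5.8.3 of \cite{MR1167198} (discrete time). What you have written is essentially a self-contained reconstruction of the standard argument behind those references: represent the partial sums in $\R^2$ by tracking only the radial coordinate, i.e.\ write $D_j = c_j e_{j-1} + (\text{orthogonal part})$ with $c_j=\scal{e_{j-1}}{D_j}$, map the radial part onto the current direction $u_{j-1}$ of the planar partial sum, and replace the orthogonal part by $\eps_j R_j u_{j-1}^{\perp}$ with an independent Rademacher sign. All the key points check out: the sign $\eps_j$ is genuinely needed (without it the orthogonal coefficient $R_j\geq 0$ would have nonzero conditional mean and $\pr{d_{1,j},d_{2,j}}$ would not be a martingale difference sequence, which is exactly why the probability space must be enlarged); the cross-term identity $\scal{\pr{A_{j-1},B_{j-1}}}{\pr{d_{1,j},d_{2,j}}}=\scal{S_{j-1}}{D_j}$ propagates $\norm{\pr{A_n,B_n}}=\norm{S_n}_{\Hi}$, which is \eqref{eq:dim_red_norm_martingale}; the pointwise identity $d_{1,j}^2+d_{2,j}^2=\norm{D_j}_{\Hi}^2$ together with \eqref{eq:elem_prop_cond_exp} gives \eqref{eq:dim_red_cond_var}; and your treatment of the $\Fca_{j-1}$-measurable event $\ens{S_{j-1}=0}$ (where the inductive identity forces $\pr{A_{j-1},B_{j-1}}=0$ as well, so both cross terms vanish) closes the induction. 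The only thing your write-up should make explicit is that the definition of $\pr{d_{1,j},d_{2,j}}$ and the proof of $\norm{\pr{A_j,B_j}}=\norm{S_j}_{\Hi}$ form a single simultaneous induction, since the well-definedness of $u_{j-1}$ on $\ens{S_{j-1}\neq 0}$ relies on the identity at step $j-1$. Compared with the paper's citation, your route costs a page of elementary computation but buys a self-contained proof; it is not a different theorem, just the proof the cited sources contain.
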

A consequence of such a dimension reduction is that 
a deviation inequality for real valued martingale difference sequences 
in terms of tails of norms of increments can be covnerted into 
a similar inequality for  
a Hilbert-valued martingale difference sequences. 
An inequality in the spirit of \eqref{eq:ineg_exp_martingales_valeurs_reelle} 
reads as follows.

\begin{Proposition}\label{prop:ineg_deviation_sans_var_cond_cas_Hilbert}
For each separable Hilbert space  $\pr{\Hi,\langle\cdot,\cdot\rangle}$ and 
each $\Hi$-valued martingale difference sequence 
$\pr{D_j}_{j\geq 1}$ with respect to a filtration $\pr{\Fca_{j}}_{j\geq 0}$, 
 \begin{equation}\label{eq:ineg_deviation_martingale_cas_Hilbert}
 \PP\pr{\max_{1\leq k\leq n}\norm{\sum_{j=1}^kD_j}_{\Hi}>x   }
  \leq 4\exp\pr{-\frac{x^2}{y^2}}+
 2 \PP\pr{ \sum_{j=1}^n
  \pr{\norm{D_j}_{\Hi}^2+\E{\norm{D_j}_{\Hi}^2\mid\Fca_{j-1}}  }
>\frac{y^2}{8} } ,
\end{equation}
where 
$\norm{x}_{\Hi}=\sqrt{\langle x,x\rangle}$. 
\end{Proposition}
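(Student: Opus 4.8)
The plan is to reduce the $\Hi$-valued statement to its real-valued counterpart \eqref{eq:ineg_exp_martingales_valeurs_reelle} by means of the dimension reduction of Lemma~\ref{lem:dimension_reduction}. First I would apply that lemma to the martingale difference sequence $\pr{D_j}_{j\geq 1}$, obtaining an $\R^2$-valued martingale difference sequence $\pr{\pr{d_{1,j},d_{2,j}}}_{j\geq 1}$ with respect to a filtration $\pr{\Gca_j}_{j\geq 0}$ satisfying $\norm{D_j}_{\Hi}^2=d_{1,j}^2+d_{2,j}^2$ together with \eqref{eq:dim_red_cond_var} and \eqref{eq:dim_red_norm_martingale}. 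The point that makes the approach work is that \eqref{eq:dim_red_norm_martingale} holds for every partial-sum index, so that $\norm{\sum_{j=1}^k D_j}_{\Hi}^2=\pr{\sum_{j=1}^k d_{1,j}}^2+\pr{\sum_{j=1}^k d_{2,j}}^2$ for each $k$; hence the running maximum on the left-hand side is faithfully transported to the two coordinate processes.

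Next I would split the event according to the two coordinates. Since $\sqrt{a^2+b^2}\leq \abs a+\abs b$, the event $\ens{\norm{\sum_{j=1}^k D_j}_{\Hi}>x}$ forces $\abs{\sum_{j=1}^k d_{1,j}}>x/2$ or $\abs{\sum_{j=1}^k d_{2,j}}>x/2$, so that
\begin{equation*}
\PP\pr{\max_{1\leq k\leq n}\norm{\sum_{j=1}^k D_j}_{\Hi}>x}\leq \PP\pr{\max_{1\leq k\leq n}\abs{\sum_{j=1}^k d_{1,j}}>\frac x2}+\PP\pr{\max_{1\leq k\leq n}\abs{\sum_{j=1}^k d_{2,j}}>\frac x2}.
\end{equation*}
To each term I would apply \eqref{eq:ineg_exp_martingales_valeurs_reelle} for the filtration $\pr{\Gca_j}$, with $x$ replaced by $x/2$ and with a parameter $\widetilde{y}$ chosen so that $\pr{x/2}^2/\widetilde{y}^2=x^2/y^2$, that is $\widetilde{y}^2=y^2/4$. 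This makes each exponential term equal to $\exp\pr{-x^2/y^2}$ and turns each square-sum term into a tail of $\sum_{j=1}^n\pr{d_{\ell,j}^2+\E{d_{\ell,j}^2\mid\Gca_{j-1}}}$ at the level $\widetilde{y}^2/2=y^2/8$.

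Finally I would recombine. Writing $S_\ell:=\sum_{j=1}^n\pr{d_{\ell,j}^2+\E{d_{\ell,j}^2\mid\Gca_{j-1}}}$ for $\ell\in\ens{1,2}$ and $S:=\sum_{j=1}^n\pr{\norm{D_j}_{\Hi}^2+\E{\norm{D_j}_{\Hi}^2\mid\Fca_{j-1}}}$, the identity $\norm{D_j}_{\Hi}^2=d_{1,j}^2+d_{2,j}^2$ and \eqref{eq:dim_red_cond_var} give $S_1+S_2=S$; since $S_1$ and $S_2$ are nonnegative, each of $\ens{S_\ell>y^2/8}$ is contained in $\ens{S>y^2/8}$. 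Adding the two coordinate bounds then produces $4\exp\pr{-x^2/y^2}+2\PP\pr{S>y^2/8}$, which is exactly \eqref{eq:ineg_deviation_martingale_cas_Hilbert}. The step I expect to require the most care is the matching of the conditional variances: the coordinate inequality is applied with respect to $\pr{\Gca_j}$, whereas the target is stated with $\pr{\Fca_j}$, and it is precisely the equality \eqref{eq:dim_red_cond_var} between the two mixed conditional-variance sums that reconciles them and allows the two coordinate tails to be absorbed into a single tail of $S$.
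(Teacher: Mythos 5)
Your proposal is correct and follows essentially the same route as the paper: dimension reduction via Lemma~\ref{lem:dimension_reduction}, a union bound over the two coordinates using $\sqrt{a^2+b^2}\leq\abs{a}+\abs{b}$, an application of \eqref{eq:ineg_exp_martingales_valeurs_reelle} with the parameter $y_1^2=y^2/4$, and recombination of the two coordinate tails through \eqref{eq:dim_red_cond_var} and the identity $\norm{D_j}_{\Hi}^2=d_{1,j}^2+d_{2,j}^2$. Your write-up actually makes explicit the final absorption step ($S_1+S_2=S$ with $S_\ell\geq 0$, hence each coordinate tail is bounded by $\PP\pr{S>y^2/8}$), which the paper leaves terse.
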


\begin{proof}
Let $\Hi$ be a separable Hilbert space and let $\pr{D_j}_{j\geq 1}$ 
be a $\Hi$-valued martingale difference sequence with respect to a filtration 
$\pr{\Fca_{j}}_{j\geq 0}$. By
Lemma~\ref{lem:dimension_reduction}, there exist an $\R^2$-valued martingale
difference sequence $\pr{\pr{d_{1,j},d_{2,j}}}_{j\geq 1}$ 
such that for each $j\geq 1$, $\norm{D_j}_{\Hi}^2=
 d_{1,j}^2+d_{2,j}^2 $ and \eqref{eq:dim_red_cond_var}, 
\eqref{eq:dim_red_norm_martingale} hold  for 
each $n$. In particular, 
 \begin{equation}\label{eq:dim_red_bound_norm_martingale}
\norm{\sum_{j=1}^nD_j}_{\Hi}\leq \abs{\sum_{j=1}^nd_{1,j} }+
\abs{\sum_{j=1}^nd_{2,j} }
\end{equation}
from which the following inequality follows:
\begin{equation*}
 \PP\pr{\max_{1\leq k\leq n}\norm{\sum_{j=1}^kD_j}_{\Hi}>x   }
 \leq  \PP\pr{\max_{1\leq k\leq n}\abs{\sum_{j=1}^kd_{1,j}}>\frac{x}{2}   }
 +\PP\pr{\max_{1\leq k\leq n}\abs{\sum_{j=1}^kd_{2,j}}>\frac{x}{2}   }.
\end{equation*}
An application of  \eqref{eq:ineg_exp_martingales_valeurs_reelle}
with 
$y^2$ replaced by $y_1^2=y^2/4$ gives
\begin{multline*}
\PP\pr{\max_{1\leq k\leq n}\norm{\sum_{j=1}^kD_j}_{\Hi}>x   }
 \leq 4\exp\pr{-\frac{x^2}{y^2}}\\+
 \PP\pr{\sum_{j=1}^n d_{1,j}^2+\E{d_{1,j}^2\mid\Gca_{j-1}}>\frac{y^2}{8}  }
 + \PP\pr{\sum_{j=1}^n d_{2,j}^2+\E{d_{2,j}^2\mid\Gca_{j-1}}>\frac{y^2}{8}  }
\end{multline*} 
and \eqref{eq:ineg_deviation_martingale_cas_Hilbert} follows from the 
combination of \eqref{eq:dim_red_cond_var} and 
\eqref{eq:dim_red_norm_martingale}.
\end{proof}
The following ordering was studied in \cite{MR606989}.
\begin{Definition}\label{def:conv_order}
 Let $X$ and $Y$ be two real-valued random variables. We say that $X\conv Y$ if
for each nondecreasing convex $\varphi\colon\R\to\R$ such that the expectations
$\E{\varphi\pr{X}}$ and $\E{\varphi\pr{Y}}$ exist, $\E{\varphi\pr{X}}
\leq \E{\varphi\pr{Y}}$.
\end{Definition}

The point of this ordering is that if $X\conv Y$, one can formulate a tail
inequality for $X$ in terms of tails of $Y$. More precisely, Lemma~2.1 in
\cite{MR4294337} gives that if $X$ and $Y$ are nonnegative random variables
such that $X\conv Y$, then for each positive  $t$,
\begin{equation}\label{eq:conv_ordering_tails_integrales}
 \PP\pr{X>t}\leq \int_1^\infty\PP\pr{Y>t\frac{v}{4}}dv.
\end{equation}

When the conditional variances admit a particular form, it
is  possible to express the tails of maxima of a martingale
by the sum of squares of the norm of the increments, that is, without 
conditional moment.
\begin{Proposition}
\label{prop:ineg_deviation_martingale_cas_Hilbert_hyp_var_cond}
 Let $\pr{\Hi,\scal{\cdot}{\cdot}_{\Hi}}$ be a separable Hilbert space. Let 
$\pr{D_j}_{j\geq 1}$ be a square integrable martingale difference sequence with 
respect to a filtration $\pr{\Fca_{j}}_{j\geq 0}$. Suppose that
 \begin{equation*}
  \E{\norm{D_j}_{\Hi}^2\mid \Fca_{j-1}}
 =\E{\norm{D_j}_{\Hi}^2\mid \Fca_{0}}\mbox{ a.s.}.
 \end{equation*}
Then for each positive $x$ and $y$, the following inequality
holds:
 \begin{equation}\label{eq:ineg_deviation_martingale_cas_Hilbert_hyp_var_cond}
 \PP\pr{\max_{1\leq k\leq n}\norm{\sum_{i=1}^kD_j}_{\Hi}>x   }
 \leq 4\exp\pr{-\frac{x^2}{y^2}}+
 4\int_1^\infty u\PP\pr{ \sqrt{\sum_{j=1}^n
  \norm{D_j}_{\Hi}^2} 
>y \frac{u}{8} }du.
\end{equation}
\end{Proposition}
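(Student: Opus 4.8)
The plan is to deduce the inequality directly from Proposition~\ref{prop:ineg_deviation_sans_var_cond_cas_Hilbert}, whose right hand side already furnishes the exponential term $4\exp\pr{-x^2/y^2}$ together with a tail term of the form $2\PP\pr{\sum_{j=1}^n\pr{\norm{D_j}_{\Hi}^2+\E{\norm{D_j}_{\Hi}^2\mid\Fca_{j-1}}}>y^2/8}$. The entire point is thus to absorb the sum of conditional variances into the sum of squared norms, and this is exactly where the hypothesis on the conditional variances enters.

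First I would set $S:=\sum_{j=1}^n\norm{D_j}_{\Hi}^2$ and $V:=\sum_{j=1}^n\E{\norm{D_j}_{\Hi}^2\mid\Fca_{j-1}}$. By the assumption $\E{\norm{D_j}_{\Hi}^2\mid\Fca_{j-1}}=\E{\norm{D_j}_{\Hi}^2\mid\Fca_0}$ together with the linearity of conditional expectation, one has $V=\sum_{j=1}^n\E{\norm{D_j}_{\Hi}^2\mid\Fca_0}=\E{S\mid\Fca_0}$, so $V$ is recognized as a version of the conditional expectation of $S$ with respect to $\Fca_0$. Note that without the special form of the conditional variances, the terms of $V$ would condition on distinct $\sigma$-algebras and this identification would fail; it is precisely here that the hypothesis is used.

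The heart of the argument is to establish the convex ordering $S+V\conv 2S$ in the sense of Definition~\ref{def:conv_order}. For a nondecreasing convex $\varphi\colon\R\to\R$, I would combine the convexity inequality $\varphi\pr{S+V}\leq\frac12\varphi\pr{2S}+\frac12\varphi\pr{2V}$ with the conditional Jensen inequality $\E{\varphi\pr{2V}}=\E{\varphi\pr{\E{2S\mid\Fca_0}}}\leq\E{\varphi\pr{2S}}$, yielding $\E{\varphi\pr{S+V}}\leq\E{\varphi\pr{2S}}$, which is precisely $S+V\conv 2S$. This is the step I expect to be the crux: everything else is routine, but it is here that the structural information about the conditional variances is turned into a usable stochastic comparison.

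Finally, I would apply the tail comparison \eqref{eq:conv_ordering_tails_integrales} to the pair $S+V\conv 2S$ with $t=y^2/8$, obtaining $2\PP\pr{S+V>y^2/8}\leq 2\int_1^\infty\PP\pr{2S>y^2v/32}dv=2\int_1^\infty\PP\pr{S>y^2v/64}dv$. The substitution $v=u^2$, under which $dv=2u\,du$, then rewrites this bound as $4\int_1^\infty u\,\PP\pr{\sqrt S>yu/8}du$, which is exactly the tail term appearing in \eqref{eq:ineg_deviation_martingale_cas_Hilbert_hyp_var_cond}. Combining this with the exponential term from Proposition~\ref{prop:ineg_deviation_sans_var_cond_cas_Hilbert} completes the proof.
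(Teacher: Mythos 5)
Your proof is correct and takes essentially the same route as the paper: the paper's own proof consists precisely of combining Proposition~\ref{prop:ineg_deviation_sans_var_cond_cas_Hilbert} with the tail comparison \eqref{eq:conv_ordering_tails_integrales} applied to $X=\sum_{j=1}^n\pr{\norm{D_j}_{\Hi}^2+\E{\norm{D_j}_{\Hi}^2\mid\Fca_{j-1}}}$ and $Y=2\sum_{j=1}^n\norm{D_j}_{\Hi}^2$. You additionally spell out the verification of the convex ordering $S+V\conv 2S$ (via the identification $V=\E{S\mid\Fca_0}$ and conditional Jensen), a step the paper leaves implicit, and your constants and the substitution $v=u^2$ check out exactly.
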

\begin{proof}
We combine \eqref{eq:ineg_deviation_martingale_cas_Hilbert} 
with \eqref{eq:conv_ordering_tails_integrales}, applied with 
$X=\sum_{j=1}^n\pr{\norm{D_j}_{\Hi}^2+\E{\norm{D_j}_{\Hi}^2\mid\Fca_{j-1}}  }$ 
and $Y=2\sum_{j=1}^n \norm{D_j}_{\Hi}^2$.
\end{proof}

 \section{Decoupling inequalities for $U$-statistics}

Let $U_n$ be a $U$-statistic defined as in \eqref{eq:def_U_stats}. Suppose
for simplicity that $h_{\gri}=h$.
In general, its treatment is a difficult problem because the
contribution of
each random variable $\xi_i$ appears in several coordinates of the function
$h$ and the random field defined by
$X_{\gri}=h\pr{\xi_{\gri}}$ is not stationary. Moreover, the involved 
filtrations 
are not easy to deal with.
A much easier object is the decoupled $U$-statistic $U_n^{\dec}$ for
which the entries are replaced by independent copies of $\pr{\xi_i}_{i\geq 1}$.
More precisely, we define
\begin{equation}\label{eq:def_U_stat_decouplee}
 U_n^{\dec}\pr{\pr{h_{\gri}}
}:=\sum_{\gri\in\inc^m_n}h_{\gri}
 \pr{\xi_{\gri}^{\dec}},
\end{equation}
$\xi_{\gri}^{\dec}=\pr{\xi_{i_1}^{\pr{1}},\dots,\xi_{i_m}^{\pr{m}}}$, and 
  the sequences $\pr{\xi_{i}^{\pr{1}}}_{i\geq 1},\dots,
\pr{\xi_{i}^{\pr{m}}}_{i\geq 1}$, are mutually independent and have  
the same distribution as the sequence $\pr{\xi_i}_{i\geq1}$.

An important feature of decoupled $U$-statistics is that the tail
of the $U$-statistic can be controlled in terms of that of
its decoupled version.
\begin{Proposition}[\cite{MR1334173}]\label{prop:decoupling}
For each $m\geq 1$, there exists a  constant $K_m$ depending only on $m$
such that  if  $\pr{\xi_i}_{i\geq 1}$ is an i.i.d. sequence taking values in a
measurable space $\pr{S,\Sca}$,  $\pr{\B,\norm{\cdot}_{\B}}$  a separable
Banach space,
$h_{\gri}\colon S^m\to\B$  and let $
U_n\pr{\pr{h_{\gri}}}$ and   $
U_n^{\dec}\pr{\pr{h_{\gri}}}$ are defined respectively as in
\eqref{eq:def_U_stats} and \eqref{eq:def_U_stat_decouplee}, then the
following inequality holds for each positive $x$:
\begin{equation*}
 \PP\pr{\norm{U_n\pr{\pr{h_{\gri}}}}_{\B} >x}
 \leq K_m\PP\pr{K_m\norm{U_n^{\dec}\pr{\pr{h_{\gri}}}}_{\B} >x}.
\end{equation*}
\end{Proposition}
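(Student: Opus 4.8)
The statement is the tail-level decoupling inequality of de la Peña and Montgomery-Smith, so my plan is to reconstruct their scheme rather than invent a new route: combine a random colouring of the index set $\intent{1,n}$ with an induction on $m$ that peels off one coordinate at a time, exploiting throughout that $\norm{\cdot}_{\B}$ is convex and that $\pr{\xi_i}_{i\geq 1}$ is i.i.d.

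First I would introduce an auxiliary i.i.d.\ sequence $\pr{\sigma_i}_{i=1}^n$, uniform on $\intent{1,m}$ and independent of the data, and for each colouring set
\[
U_n^{\sigma}:=m^m\sum_{\substack{\gri\in\inc^m_n\\ \sigma_{i_k}=k,\ 1\leq k\leq m}}h_{\gri}\pr{\xi_{\gri}}.
\]
Since $\PP_{\sigma}\pr{\sigma_{i_k}=k,\ 1\leq k\leq m}=m^{-m}$ for every fixed $\gri$, averaging over the colouring gives $\mathbb{E}_{\sigma}\!\left[U_n^{\sigma}\right]=U_n\pr{\pr{h_{\gri}}}$. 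The crucial point is that, conditionally on $\sigma$, the colour classes $C_k=\ens{i\leq n:\sigma_i=k}$ are pairwise disjoint, so the families $\pr{\xi_i}_{i\in C_k}$, $k\in\intent{1,m}$, are mutually independent; as $h_{\gri}$ is evaluated only at tuples with $i_k\in C_k$, the conditional law of $U_n^{\sigma}/m^m$ coincides with that of the colour-restricted decoupled statistic $\sum_{\gri:\, i_k\in C_k}h_{\gri}\pr{\xi_{\gri}^{\dec}}$. Applying the very same colouring to the decoupled data and averaging yields $m^m\,\mathbb{E}_{\sigma}\!\left[\sum_{\gri:\,i_k\in C_k}h_{\gri}\pr{\xi_{\gri}^{\dec}}\right]=U_n^{\dec}\pr{\pr{h_{\gri}}}$, so the colouring links the two objects through disjoint independent blocks.

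By convexity of the norm and Jensen applied to the average over $\sigma$, one obtains $\norm{U_n}_{\B}\leq \mathbb{E}_{\sigma}\!\left[\norm{U_n^{\sigma}}_{\B}\mid \text{data}\right]$, which already delivers the convex-function (moment) form of decoupling, \emph{provided} one knows how to compare a single colour-restricted decoupled sum with the full $U_n^{\dec}$. The genuinely delicate step, and what I expect to be the \textbf{main obstacle}, is upgrading this to raw tail probabilities with the constant $K_m$ appearing both inside and outside $\PP$. The sub-sum-to-full-sum comparison is not innocuous at the tail level: the full decoupled statistic contains extra tuples whose contributions could cancel, and because the one-variable increments $\sum_{i_1}\ind{i_1\in C_1}h_{\gri}\pr{\cdots}$ are independent but neither centred nor symmetric, a naive contraction is unavailable.

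To overcome this I would follow de la Peña–Montgomery-Smith: view the coordinates as a tangent (conditionally independent) sequence and apply a tail decoupling inequality for such sequences, removing the colour restriction by induction on $m$, the contraction at each stage being legitimate because, conditionally on the remaining coordinates, the decoupled sum is multilinear in independent blocks. The bookkeeping of the dimensional constants through this induction, together with the justification of the restricted-versus-full comparison at the tail (rather than convex-order) level, is the technical heart of the argument. Since the inequality is available in \cite{MR1334173}, in the paper I would simply invoke it, but the partition-plus-induction scheme above is the route I would take to reprove it.
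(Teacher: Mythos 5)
The paper does not prove this proposition at all: it is imported directly from de la Pe\~na and Montgomery-Smith \cite{MR1334173}, which is exactly what you conclude by saying you would ``simply invoke it,'' so your proposal and the paper coincide on the only step the paper actually takes. Your colouring-plus-induction sketch is a reasonable reconstruction of the cited reference's own argument (random partition, conditional equality in law on disjoint colour classes, then the genuinely hard tail-level comparison), but since the paper's proof consists of the citation alone, there is nothing further to compare.
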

Since we are interested in maxima of $U$-statistics, we will need the
following consequence of Proposition~\ref{prop:decoupling}.
\begin{Corollary}\label{cor:decoupling_max}
 For each $m\geq 1$, there exists a constant $K_m$ depending only on $m$
such that  if  $\pr{\xi_i}_{i\geq 1}$ is an i.i.d.\ sequence taking values in a
measurable space $\pr{S,\Sca}$,  $\pr{\Hi,\scal{\cdot}{\cdot}}$  a
separable Hilbert space,
$h_{\gri}\colon S^m\to\Hi$  and let $
U_n\pr{\pr{h_{\gri}}}$ and   $
U_n^{\dec}\pr{\pr{h_{\gri}}}$ are defined respectively as in
\eqref{eq:def_U_stats} and \eqref{eq:def_U_stat_decouplee}, then the
following inequality holds for each positive $x$ and $N\geq m$:
\begin{equation*}
 \PP\pr{\max_{m\leq n\leq N}\norm{U_n\pr{\pr{h_{\gri}}}}_{\Hi} >x}
 \leq K_m\PP\pr{K_m\max_{m\leq n\leq
N}\norm{U_n^{\dec}\pr{\pr{h_{\gri}}}}_{\Hi} >x}.
\end{equation*}
\end{Corollary}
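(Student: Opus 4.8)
The plan is to encode the entire running maximum over $m\leq n\leq N$ as a single norm in an auxiliary product space, so that Corollary~\ref{cor:decoupling_max} reduces to one application of Proposition~\ref{prop:decoupling}. Since the constant $K_m$ in that proposition depends only on $m$ and not on the underlying Banach space, this will yield the claim with exactly the same constant.

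First I would introduce the separable Banach space $\B=\Hi^{N-m+1}$, whose elements are families $\pr{x_n}_{m\leq n\leq N}$ with $x_n\in\Hi$, equipped with the norm $\norm{\pr{x_n}_n}_{\B}=\max_{m\leq n\leq N}\norm{x_n}_{\Hi}$. As a finite product of separable Hilbert spaces under the sup-norm, $\B$ is complete and separable, hence a separable Banach space to which Proposition~\ref{prop:decoupling} applies. Next, for each $\gri=\pr{i_1,\dots,i_m}\in\inc^m$ I would lift $h_{\gri}$ to a $\B$-valued kernel $\widetilde{h_{\gri}}\colon S^m\to\B$ by
\[
\widetilde{h_{\gri}}\pr{s_1,\dots,s_m}=\pr{\ind{i_m\leq n}\,h_{\gri}\pr{s_1,\dots,s_m}}_{m\leq n\leq N}.
\]
The deterministic indicator records that the term indexed by $\gri$ enters the partial sum $U_n$ precisely when $i_m\leq n$, and since it does not involve the data, $\widetilde{h_{\gri}}$ is measurable.

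The key observation is that the $n$-th coordinate of the $\B$-valued $U$-statistic $U_N\pr{\pr{\widetilde{h_{\gri}}}}=\sum_{\gri\in\inc^m_N}\widetilde{h_{\gri}}\pr{\xi_{\gri}}$ equals $\sum_{\gri\in\inc^m_n}h_{\gri}\pr{\xi_{\gri}}=U_n\pr{\pr{h_{\gri}}}$, so that $\norm{U_N\pr{\pr{\widetilde{h_{\gri}}}}}_{\B}=\max_{m\leq n\leq N}\norm{U_n\pr{\pr{h_{\gri}}}}_{\Hi}$. The same identity holds for the decoupled versions: replacing $\xi_{\gri}$ by $\xi_{\gri}^{\dec}$ leaves the indicator pattern untouched, whence $\norm{U_N^{\dec}\pr{\pr{\widetilde{h_{\gri}}}}}_{\B}=\max_{m\leq n\leq N}\norm{U_n^{\dec}\pr{\pr{h_{\gri}}}}_{\Hi}$.

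Finally I would apply Proposition~\ref{prop:decoupling} to the kernels $\widetilde{h_{\gri}}$ in the space $\B$ at the single index $N$. The resulting bound $\PP\pr{\norm{U_N\pr{\pr{\widetilde{h_{\gri}}}}}_{\B}>x}\leq K_m\PP\pr{K_m\norm{U_N^{\dec}\pr{\pr{\widetilde{h_{\gri}}}}}_{\B}>x}$ is turned, via the two norm identifications above, into exactly the asserted inequality for the running maxima. The only point requiring genuine care is checking that the sup-norm product space is a bona fide separable Banach space, so that Proposition~\ref{prop:decoupling} applies verbatim with its $m$-dependent constant; the remainder is the bookkeeping of the coordinatewise identities, which is routine.
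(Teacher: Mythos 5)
Your proof is correct and is essentially the paper's own argument: the paper likewise applies Proposition~\ref{prop:decoupling} in the product space $\Hi^N$ with the max norm, using the lifted kernels $\widetilde{h_{\gri}}\pr{s_1,\dots,s_m}=\pr{h_{\gri}\pr{s_1,\dots,s_m}\ind{i_m\leq n}}_n$, so that the running maximum becomes a single norm and the $m$-dependent constant carries over unchanged. The only (immaterial) difference is that you index the product space by $m\leq n\leq N$ rather than $1\leq n\leq N$.
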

\begin{proof}
 We apply Proposition~\ref{prop:decoupling} to the following setting:
 we take $\B=\Hi^N$ endowed with the norm $\norm{\pr{y_n}_{n=1}^N}_{\B}:=
 \max_{1\leq n\leq N}\norm{y_n}_{\Hi}$
and for $\gri\in \inc^m$ consider
$\widetilde{h_{\gri}}\colon S^m\to \Hi^N$ defined as
\begin{equation*}
 \widetilde{h_{\gri}}\pr{s_1,\dots,s_m}
 =\pr{ H_{\gri}^{\pr{n}}\pr{s_1,\dots,s_m}   }_{n=1}^N,
\end{equation*}
where
\begin{equation*}
 H_{\gri}^{\pr{n}}\pr{s_1,\dots,s_m} =
 h_{i\gri}\pr{s_1,\dots,s_m}\ind{i_m\leq n}.
\end{equation*}
In this way,
\begin{equation*}
 \max_{m\leq n\leq N}\norm{U_n\pr{\pr{h_{\gri}}}}_{\Hi}
 =\norm{U_N\pr{\pr{\widetilde{h_{\gri}}}}   }_{\B}
\end{equation*}
and a similar equality holds for the decoupled versions of the $U$-statistics.
\end{proof}
\end{appendix}
\def\polhk\#1{\setbox0=\hbox{\#1}{{\o}oalign{\hidewidth
  \lower1.5ex\hbox{`}\hidewidth\crcr\unhbox0}}}\def\cprime{$'$}
  \def\polhk#1{\setbox0=\hbox{#1}{\ooalign{\hidewidth
  \lower1.5ex\hbox{`}\hidewidth\crcr\unhbox0}}} \def\cprime{$'$}

\end{document}